\newtheorem{theorem}{Theorem}
\newtheorem*{lemma*}{Claim}
\newtheorem{lemma}[theorem]{Lemma}
\newtheorem{coro}[theorem]{Corollary}
\newtheorem{definition}[theorem]{Definition}
\newtheorem{prop}[theorem]{Proposition}
\newtheorem{Remark}[theorem]{Remark}
\numberwithin{theorem}{section}
\numberwithin{equation}{section}
\title{A Note on Weak Saturation Number of Trees}
\author{Wenchong Chen}
\address[Chen]{Nankai University, Weijin Road 94, Nankai District, Tianjin, 300192, PR China}
\email{2212161@mail.nankai.edu.cn}
\author{Xiao-Chuan Liu}
\address[Liu]{Departamento de Matemática,
 Universidade Federal de Pernambuco,
	Avenida Jornalista Aníbal Fernandes, Cidade Universitária, Recife, Brazil}
\email{xiaochuan.liu@ufpe.br}
\author{Xu Yang}
\address[Yang]{Instituto de Computação, Universidade Federal de Alagoas,
	Av. Lourival Melo Mota, S/N, Maceió, Brazil} 
\email{yang@ic.ufal.br}
\begin{document}

\maketitle

\begin{abstract}In this paper, we estimate the weak saturation numbers of trees. 
As a case study, we examine  caterpillars and obtain several tight estimates. In particular, this implies 
that for any $\alpha\in [1,2]$, there exist caterpillars with $k$ vertices whose weak saturation numbers are of order $k^\alpha$.
We call a tree good if its weak saturation number is exactly its edge number minus one. 
We provide a sufficient condition for a tree to be a good tree. 
With the additional property that all leaves are at even distances from each other, this condition fully characterizes 
good trees. 
\end{abstract}

\section{Introduction}
\label{sec:introduction}

We follow ~\cite{murty2008graph} for graph theory notation. Throughout, we consider only simple, finite, and undirected graphs. We use 
$V(G)$ and $E(G)$ to denote the vertex set and edge set of a graph $G$, respectively, and $v(G)$ and
$e(G)$ to denote the number of vertices and edges of $G$.
Given a graph $F$, a graph $G$ is said to be weakly 
$F$-saturated if its non-edges can be added one by one in a specific order, such that each newly added edge forms a new copy of $F$.
For any integer  $n$, the weak saturation number of  $F$, denoted  $\text{w-sat}(n, F)$, 
is the minimum size of an $n$-vertex weakly $F$-saturated graph. 
This process produces a sequence of graphs, each with one additional edge, culminating in the complete graph $K_n$. This process is also referred to as an $F$-bootstrap percolation process. Bollobás~\cite{bollobas1968weakly} introduced both notions in 1968 and conjectured that  
$\text{w-sat}(n,K_k)={k\choose 2}-1+(n-k)(k-2)$ for any $n\geq k$, a result later proved by Lov\'asz in~\cite{lovasz1977flats}. 
Lov\'asz's proof involves flats in matroids representable over fields and is somewhat mysterious, given that this is a purely combinatorial problem. 
Consequently, several researchers have studied these problems, offering various proofs, including those by Frankl~\cite{frankl1982extremal}, Kalai~\cite{kalai1984weakly}, Alon~\cite{alon1985extremal}, and Yu~\cite{yu1993extremal}. It remains an open question whether a truly elementary combinatorial proof exists, as none of the above qualify. Determining the exact values of weak saturation numbers is generally very difficult. 
Aside from the complete graphs discussed above, 
a recent result states that $\text{w-sat}(n,K_{t,t})={t\choose 2} + (t-1)(n-t/2+1)$, with $t\geq 2$ and $n\geq 3t-3$, which was proved in~\cite{kronenberg2021weak}
using the elegant linear algebra method. 
We also highlight a recent combinatorial approach by Terekhov and Zhukovskii~\cite{terekhov2023weak}, 
which provides precise weak saturation numbers for certain graphs.

Turning our attention to a more relaxed problem, we can ask 
for the correct order and leading coefficients for the weak saturation numbers of some fixed graph 
$F$ with $k$ vertices and with minimum degree  $\delta$. 
A straightforward upper bound
arises from constructing a copy of $K_k\setminus e$ along with  $n-k$ additional vertices, 
each joining to $\delta-1$ vertices of that copy. 
This immediately implies that for any $n\geq v(F)$, 
$\text{w-sat}(n,F)\leq {k\choose 2}-1+(\delta-1)(n-k)=(\delta-1)n +O(1)$. However, there are no matching lower bounds in general. 
Faudree, Gould and Jacobson originally claimed the estimate $\text{w-sat}(n,F)\geq (\frac {\delta}{2}-\frac{1}{\delta+1} )n +o(n)$ in~\cite{faudree2013weak}, though their proof contained flaws. The result was later corrected by Terekhov and Zhukovskii in~\cite{terekhov2023weak}.
In this paper, we mainly discuss weak saturation number of trees. 
Likely because their weak saturation numbers are  $O(1)$, trees, despite being the simplest and most fundamental types of graphs, have not been fully understood and were somewhat overlooked, with only a few results available, as seen in ~\cite{faudree2011survey, faudree2013weak}. 
First, we note that the number 
$\text{w-sat}(n,T)$ is always non-increasing. The proof will be provided in Section~\ref{sec:General_Estimation}.
\begin{prop}\label{decrease}
For any graph $F$ with minimum degree $1$, $\text{w-sat}(n, F)$ is 
non-increasing with respect to $n$ for $n\geq v(F)$.   
\end{prop}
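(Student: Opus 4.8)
The plan is to prove the equivalent statement $\text{w-sat}(n+1,F)\le \text{w-sat}(n,F)$ for every $n\ge v(F)$, from which the monotonicity follows by induction. Start from an $n$-vertex weakly $F$-saturated graph $G$ realizing the minimum, i.e.\ $e(G)=\text{w-sat}(n,F)$, and form $G'$ by adding one isolated vertex $v$; then $v(G')=n+1$ and $e(G')=e(G)$, so it suffices to show that $G'$ is weakly $F$-saturated.

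First I would replay the given $F$-bootstrap ordering of the non-edges of $G$ inside $G'$. Each edge added in that ordering lies entirely among the original $n$ vertices and, by hypothesis on $G$, completes a new copy of $F$ there; the same copy is a new copy of $F$ in $G'$, since "new" only asks that the copy use the just-added edge. After this phase $G'$ has become a clique $K_n$ on the original vertex set together with the still-isolated vertex $v$.

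Next I would add the $n$ edges incident to $v$ one at a time, in an arbitrary order $vu_1,vu_2,\dots,vu_n$. When $vu_i$ is added, I claim it completes a new copy of $F$. Here the hypothesis $\delta(F)=1$ is used: fix a vertex $\ell$ of $F$ with $\deg_F(\ell)=1$ and let $p$ be its unique neighbor. Map $\ell\mapsto v$, $p\mapsto u_i$, and the remaining $v(F)-2$ vertices of $F$ injectively into $\{u_1,\dots,u_n\}\setminus\{u_i\}$, which is possible because $n-1\ge v(F)-1$. Every edge of $F$ other than $\ell p$ is sent to an edge inside the clique on the original vertices, and $\ell p$ is sent to $vu_i$, which is present; hence this is a genuine copy of $F$, and it uses the just-added edge $vu_i$, so it is new. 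After $vu_n$ the graph $G'$ equals $K_{n+1}$, so $G'$ is weakly $F$-saturated and $\text{w-sat}(n+1,F)\le e(G')=\text{w-sat}(n,F)$.

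The only genuinely delicate point is the very first edge $vu_1$: at that moment $v$ has degree exactly $1$, so the vertex of $F$ placed at $v$ must be a leaf of $F$. This is precisely where $\delta(F)=1$ is indispensable (it guarantees both that $F$ has a leaf and that $F$ has no isolated vertex, so the embedding is well defined), and it is why the statement can fail for graphs of higher minimum degree. The remaining verifications — that there are enough original vertices to host the rest of $F$, and that each newly added edge is genuinely used by the exhibited copy — are routine and rely only on $n\ge v(F)$.
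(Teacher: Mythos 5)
Your proof is correct and follows essentially the same route as the paper: add isolated vertices to a minimum weakly $F$-saturated graph, replay the original percolation to reach a clique on the old vertex set, and then use a pendant edge of $F$ to absorb the new vertices. The only cosmetic difference is that you add one vertex at a time and exhibit the final embeddings explicitly, whereas the paper adds all $n-m$ isolated vertices at once and delegates the last phase to Lemma~\ref{min_degree_condition}.
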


Therefore, for a tree $T$, $\text{w-sat}(n,T)$
remains constant for all sufficiently large $n$.
We denote by $\text{w-sat}(T)$ the limit number of 
$\text{w-sat}(n,T)$ as $n$ tends to infinity.
In extremal graph theory, 
a famous rational exponent conjecture  due to Erd\H{o}s and Simonovits~\cite{erdHos1981combinatorial} 
states that,
for every rational number  $r \in (1, 2)$, there exists a graph whose Tur{\'a}n exponent equals  $r$. 
However, for trees, the weak saturation number depends only on their size $K$ and not on $n$.
In this paper, we prove 
that all rational exponents between $1$ and $2$ 
can be realized by  weak saturation numbers of trees
in terms of their orders.

\begin{theorem}\label{exponents} 
For any $\alpha \in [1,2]$, 
	there exists a family $\{T_K\}_{K\geq 5}$ of trees such that, each $T_K$ has $K$ vertices,  and 
	$\text{w-sat}(T_K)=\Theta(K^{\alpha})$. 
\end{theorem}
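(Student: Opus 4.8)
The plan is to realize the exponent $\alpha$ by a (nearly) uniform caterpillar whose weak saturation number I can pin down up to a constant factor. Put $d=\lceil k^{\alpha/2}\rceil$ and $s=\lfloor k/(d+1)\rfloor$; if $s\le 1$ (which for a fixed $\alpha<2$ happens only for finitely many $k$, and always when $\alpha=2$) take $T_k=K_{1,k-1}$, and otherwise let $T_k$ be the caterpillar with spine $a_1a_2\cdots a_s$ in which each $a_i$ carries $d$ pendant leaves except $a_1$, which carries $d+r$ leaves, where $r=k-s(d+1)\in\{0,1,\dots,d\}$. In every case $T_k$ is a caterpillar on exactly $k$ vertices, of maximum degree $\Theta(d)$, with the feature we shall use: every edge of $T_k$ is incident to a spine vertex, and every spine vertex of $T_k$ has degree at least $d-1$. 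Since $\alpha\ge 1$ one checks $s\le d$, so $d^2\ge sd$ and $d^2=\Theta(k^\alpha)\ge k$. I will show $\text{w-sat}(n,T_k)=\Theta(d^2)$ for all large $n$, which by Proposition~\ref{decrease} gives $\text{w-sat}(T_k)=\Theta(d^2)=\Theta(k^\alpha)$.

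For the lower bound I combine two estimates. First, $\text{w-sat}(n,T_k)\ge v(T_k)-2=k-2$: the first edge added to a weakly $T_k$-saturated graph $G$ lies in a copy of $T_k$, whose remaining $v(T_k)-2$ edges are already present in $G$. Second, $\text{w-sat}(n,T_k)=\Omega(d^2)$. Since every edge of $T_k$ is incident to a spine vertex and every spine vertex has degree at least $d-1$, whenever a newly added edge $e$ completes a copy of $T_k$ one endpoint of $e$ must already have degree at least $d-2$ in the current graph. Fix a percolation order on $G$ and list the vertices $w_1,w_2,\dots$ in the order in which they first reach degree $d-2$. The remark forces $w_1$ to have degree $\ge d-2$ already in $G$; and when $w_j$ first reaches degree $d-2$, each of its $d-2$ incident edges either lies in $G$ or was added earlier, and every added one, being a legal move, is anchored at a vertex of current degree $\ge d-2$, which cannot be $w_j$ and is therefore some $w_{j'}$ with $j'<j$; hence at most $j-1$ of the $d-2$ edges at $w_j$ were added during the process, so $\deg_G(w_j)\ge d-1-j$. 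Summing over $j\le d-2$ gives $2e(G)\ge\binom{d-1}{2}$, i.e.\ $e(G)=\Omega(d^2)$. Together, $\text{w-sat}(n,T_k)=\Omega(k+d^2)=\Omega(k^\alpha)$, whence $\text{w-sat}(T_k)=\Omega(k^\alpha)$.

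For the upper bound I exhibit, for every large $n$, a weakly $T_k$-saturated graph $G$ on $n$ vertices with only $O(d^2)$ edges, independently of $n$: take a clique on a core set $A$ with $|A|=d+s+10$, attach $d$ fresh pendant leaves to each of $s$ distinguished core vertices (and $r$ extra leaves to one of them), and leave the remaining $n-|A|-sd-r$ vertices isolated; then $e(G)=\binom{|A|}{2}+sd+r=O(d^2+sd)=O(d^2)$. The percolation runs in phases. First, join every isolated vertex to every core vertex: each such addition completes a new copy of $T_k$ whose spine sits inside the clique, with the new edge serving as one pendant edge and the installed pendants (together with still-isolated vertices) as the remaining leaves. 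Next, do the same for every installed pendant; after this every vertex outside $A$ has degree $\ge d+1$. Finally, add each remaining non-edge $xy$ via a copy of $T_k$ in which $xy$ is a pendant edge incident to $x$ (now of large enough degree to be a spine vertex), with the rest of the spine routed through $A$ and all other leaves taken among the now-plentiful high-degree vertices. This reaches $K_n$, so $\text{w-sat}(T_k)=O(k^\alpha)$; combined with the lower bound, $\text{w-sat}(T_k)=\Theta(k^\alpha)$.

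I expect the main obstacle to be the completeness of this percolation all the way to $K_n$: after the first two phases the host graph is full of low-degree vertices (the original pendants and the absorbed isolated vertices) that still cannot serve as a spine vertex of an intermediate copy, so the order of additions and the witnessing copies must be chosen carefully — this is exactly what forces the core clique to have size at least $d+s$, so that it can carry an entire spine together with enough spare vertices to be leaves, and it is where most of the case analysis lives. A minor but necessary technicality is the rounding that guarantees $v(T_k)=k$ exactly while keeping every spine vertex heavy; concentrating the surplus $r$ leaves on a single spine vertex (rather than spreading them out) leaves both of the above bounds intact.
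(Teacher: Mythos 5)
Your proposal is correct in substance but takes a genuinely different, essentially self-contained route. The paper simply sets $a_k=\lfloor k^{\alpha/2}\rfloor$, takes the two-spine caterpillar $T_k=C_{a_k,\,k-a_k-2}$, and reads off $\text{w-sat}(T_k)=k-2+\binom{a_k-1}{2}=\Theta(k^{\alpha})$ from Theorem~\ref{goodcat}; the theorem is thus a two-line corollary of the exact caterpillar formula. You instead build a near-uniform caterpillar with $s\approx k/d$ spine vertices each carrying about $d=\lceil k^{\alpha/2}\rceil$ leaves, and prove both bounds from scratch. Your lower bound -- ordering vertices by the time they first reach degree $d-2$ and charging the added edges at $w_j$ to earlier ``anchors'' $w_{j'}$ -- is a clean argument that checks out (each legal move has an endpoint of degree at least $d$ playing a spine vertex, so the anchor of any edge at $w_j$ is the \emph{other} endpoint and precedes $w_j$, giving $\deg_G(w_j)\ge d-1-j$ and $e(G)\ge\frac12\binom{d-1}{2}$); it is a symmetrized version of the peeling in the paper's Proposition~\ref{catwsat}, losing a factor of $2$ that is irrelevant for a $\Theta$-statement. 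Your upper-bound construction is morally the paper's $H+K_{a-1}+\overline{K}_{n-k-a}$ with a core enlarged to carry the whole spine. What the paper's route buys is exact constants; what yours buys is independence from Theorem~\ref{goodcat} and a lower-bound argument that works for any graph in which every edge meets a vertex of large degree. Two small points to repair in a full write-up: in your first percolation phase the ``still-isolated vertices'' cannot serve as leaves of the witnessing copy (they have no edges), but this parenthetical is unnecessary since the installed pendants together with the $d+10$ spare clique vertices already supply all required leaves -- this is exactly why the core must have size about $d+s$; and the degenerate case $s\le 1$ (in particular all of $\alpha=2$) silently uses $\text{w-sat}(K_{1,k-1})=\Theta(k^2)$, which is standard but should be stated or derived from your own anchor argument.
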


A nondegenerate caterpillar 
$C_{a_1, a_2,\dots, a_\ell}$, where $a_t>0$, $t=1,2,\dots, \ell$, is formed by adding $a_t$ 
pendant edges to each $v_t$ of a path 
$P_\ell=(v_1,v_2,\dots,v_\ell)$, for 
every $t=1,\dots,\ell$. 
Our next result gives a precise characterization of the weak saturation numbers for nondegenerate caterpillars.

\begin{theorem}\label{goodcat}
Consider a nondegenerate 
caterpillar $T=C_{a_1, a_2, \dots, a_\ell}$ on $K$ vertices, 
and write $a=\min \{a_t  \, \big| \ 1\leq t\leq \ell \}$. Then $\text{w-sat}(T)=K-2+{a-1\choose 2}$.
\end{theorem}

A tree $T$ is called a {\it good tree}
if $\text{w-sat}(T)=e(T)-1$. 
It is natural to explore the structure of good trees, and we prove the next theorem for this purpose. 
\begin{theorem}\label{goodtree}
 Suppose a tree $T$ satisfies that the distances between any two leaves are even.
Then $T$ is a good tree 
if and only if there exists a leaf that is adjacent to a vertex of degree 2.
\end{theorem}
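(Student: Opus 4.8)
The plan is to prove both directions after one reduction. Since $\text{w-sat}(n,T)\ge e(T)-1$ trivially (the first added edge already lies in a copy of $T$, so $G$ contains a copy of $T$ minus an edge) and $\text{w-sat}(\cdot,T)$ is non‑increasing by Proposition~\ref{decrease}, a tree $T$ is good if and only if, for some $m$, the graph $T\cup\overline{K_m}$ admits a $T$‑bootstrap percolation process reaching $K_{m+v(T)}$. Indeed, any $n$‑vertex weakly $T$‑saturated graph with exactly $e(T)-1$ edges becomes, after adding its first edge, a copy of $T$ together with isolated vertices (the newly formed copy must use all $e(T)$ of the resulting edges); hence it is exactly $(T-f)\cup\overline{K_{n-v(T)}}$ for some $f\in E(T)$, and adding $f$ first reduces the percolation to that of $T\cup\overline{K_m}$. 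The ``if'' direction is then immediate from the sufficient condition for good trees established earlier: a leaf adjacent to a degree‑$2$ vertex means $T$ contains a pendant path $u$–$w$–$x$ with $\deg_T(w)=2$, which is the configuration that condition handles (the point being that in a copy of $T$, the vertex occupying the position $w$ needs only one neighbour besides the new leaf in order to take over the role of $x$, allowing the pendant $P_3$ to be ``slid'' during the percolation).

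For the ``only if'' direction assume that $T$ has even leaf‑distances, so that, fixing a bipartition $X\sqcup Y$, all leaves lie in $X$ and every vertex of $Y$ has degree $\ge 2$, and assume in addition that no leaf is adjacent to a degree‑$2$ vertex, i.e.\ $\deg_T(q)\ge 3$ for every $q\in Y$ that is adjacent to a leaf. Fix a copy $T_0\cong T$ in $T_0\cup\overline{K_m}$, let $Y_0$ be the image of $Y$, and colour $V$ by $\chi$: colour $-$ on $Y_0$, colour $+$ everywhere else. I would run the percolation and maintain the invariants that (i) $\chi$ stays a proper $2$‑colouring of the current graph $H$ (equivalently, $H$ remains bipartite with parts $Y_0$ and $V\setminus Y_0$), and (ii) every copy in $H$ of a subtree of $T$ having at least $3$ vertices is \emph{$\chi$‑aligned}: its $X$‑vertices sit on colour $+$ and its $Y$‑vertices on colour $-$. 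Note that $\chi^{-1}(-)=Y_0$ is fixed and has size exactly $|Y|$, precisely the number of $Y$‑positions in a copy of $T$; also each $Y$‑vertex of $T$ has degree $\ge 2$, a property only strengthened by adding edges. Both invariants hold in $T_0\cup\overline{K_m}$, whose only subtree copies lie inside $T_0$. Granting their preservation, $H$ stays bipartite forever, hence is never $K_n$: if $|Y_0|\ge 2$ an edge inside $Y_0$ is always missing, and if $|Y_0|=1$ (so $T=K_{1,r}$ with $r\ge 3$) then $H$ remains a star. This gives $\text{w-sat}(n,T)\ge e(T)$ for all large $n$, so $T$ is not good.

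It remains to preserve the invariants when an edge $e=\psi(p)\psi(q)$ with $pq\in E(T)$, $p\in X$, $q\in Y$, is added to create a new copy $\psi(T)$; write $T-pq=A\sqcup B$ with $p\in A$, $q\in B$. Because no leaf of $T$ is adjacent to a degree‑$2$ vertex, neither $A$ nor $B$ can be a single edge (such a branch would join a leaf of $T$ to a degree‑$2$ vertex of $T$); thus each branch is a single vertex or has $\ge 3$ vertices, only $A$ can be a single vertex (namely when $p$ is a leaf), and $B$ always has $\ge 3$ vertices. If both branches have $\ge 3$ vertices, invariant (ii) applied to $\psi|_A,\psi|_B$ forces $\chi(\psi(p))=+$ and $\chi(\psi(q))=-$, so $e$ is bichromatic. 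If $A=\{p\}$ with $p$ a leaf, then $\psi|_B=\psi|_{T-p}$ is a copy of a subtree on $\ge 3$ vertices, so by (ii) its $Y$‑positions land in $Y_0$; since $|Y|=|Y_0|$ and $\psi$ is injective, $\psi(Y)=Y_0$, whence $\psi(p)\notin Y_0$, i.e.\ $\chi(\psi(p))=+$, while $\chi(\psi(q))=-$; again $e$ is bichromatic, so (i) persists. Once $e$ is bichromatic, any new copy of $T$ or of a $\ge 3$‑vertex subtree through $e$ inherits $\chi$‑alignment from its two branches (the $\ge 3$‑vertex ones by (ii), the degenerate ones by consistency with the now‑bichromatic $e$), so (ii) persists. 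The main obstacle is exactly this last step for (ii): one must also pin down the alignment of copies of the smallest subtrees arising as sub‑branches — single edges, and paths on three vertices, which carry automorphisms swapping the two colour classes — and check that the two hypotheses leave no configuration in which alignment can fail. It is here that ``even leaf‑distances'' (making the bipartition, and hence the notion of alignment, canonical) and ``no leaf adjacent to a degree‑$2$ vertex'' (eliminating the two‑vertex branches and enabling the pigeonhole on $Y_0$) are both genuinely used.
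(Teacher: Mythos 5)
Your reduction to the percolation of $T\cup\overline{K_m}$ and your ``if'' direction are both fine and agree with the paper (the latter is its Lemma~\ref{P2_condition}). The genuine gap is in the ``only if'' direction: your invariant (ii) is not a well-defined property, and it is already false in the starting graph $T_0\cup\overline{K_m}$, so the alignment mechanism never gets off the ground. The trouble is that one and the same subgraph of $H$ can be a copy of two different subtrees of $T$ whose bipartitions disagree, so ``its $X$-vertices'' has no canonical meaning. Concretely, take the tree the paper itself uses as a counterexample later in that section: $V(T)=\{v_1,\dots,v_7\}$ with edges $v_1v_2,v_1v_3,v_2v_4,v_2v_5,v_3v_6,v_3v_7$, so $X=\{v_1,v_4,v_5,v_6,v_7\}$ and $Y=\{v_2,v_3\}$. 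The branch $T[\{v_2,v_4,v_5\}]$ is a $P_3$ with centre in $Y$, but the path $v_2v_1v_3$ inside $T_0$ is also a copy of that subtree, with its centre on an $X_0$-vertex and its ends in $Y_0$ --- misaligned from the outset. Hence your assertion that ``both invariants hold in $T_0\cup\overline{K_m}$'' fails, and since every subsequent step leans on (ii) both to force new edges to be bichromatic and to re-establish (ii), the induction collapses. You flag this obstacle yourself in your last sentences, but it is not a residual case-check: the invariant must be abandoned, not patched.

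What is needed --- and what the paper does --- is to keep only your invariant (i) and replace alignment by counting. Keep $H$ bipartite with parts $Y_0$ and $V\setminus Y_0$, and at the first monochromatic edge $e=\psi(p)\psi(q)$ consider the branches $A\ni p$ and $B\ni q$ of $T-pq$: each of $\psi(A)$, $\psi(B)$ is connected and bipartite, so its two colour classes are $\psi(A\cap X)$ and $\psi(A\cap Y)$ in \emph{some} order, and you never need to know which. The paper then proves two counting lemmas (its Claims 1 and 2): every tree with all leaf-distances even has strictly more $X$-vertices than $Y$-vertices, and the surplus is at least $3$ under (P2') --- the latter is exactly what rescues the branch $B$ when $q$ becomes a leaf of $B$ (one attaches two pendant vertices to $q$ to restore the even-leaf-distance property). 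These inequalities show that, however the branches fall, the colour class of $H$ containing both $\psi(p)$ and $\psi(q)$ has more than $|Y|=|Y_0|$ vertices, contradicting the fact that one class is $Y_0$. Your observations that (P2) forbids two-vertex branches and your pigeonhole $|Y|=|Y_0|$ are correct and reappear in that argument, but without the counting lemmas your proof does not close.
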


The rest of the paper is organized as follows. In Section~\ref{sec:General_Estimation}, we will present some general estimates for trees and more generally graphs with at least one pendant edge. In 
Section~\ref{sec:Caterpillar}, we focus on caterpillars and prove Theorems~\ref{exponents} and \ref{goodcat}.
In Section~\ref{sec:Local_Structures} we prove Theorem~\ref{goodtree}. We will also present a counterexample of Theorem 8 in~\cite{faudree2013weak}. 

\section{General Estimates}
\label{sec:General_Estimation}

In this section, we present several general estimates for weak saturation numbers as well as the 
proof of Proposition~\ref{decrease}.
We will use the following definition introduced in~\cite{faudree2013weak}.

\begin{definition}
Given a graph $F$, we define an induced subgraph $S=K_{1,s}$ to be an end-star of $F$, 
if the center of $S$ has degree $s+1$ in $F$ and $s$ of the other vertices of $S$ have degree 1 in $F$. 
If $F$ contains at least an end-star, 
the minimum end-degree of $F$, denoted by $\delta_e (F)$, 
is the minimum degree of the center of an end-star of $F$.    
\end{definition}

The following simple lemma turns out to be crucial.
\begin{lemma}\label{min_degree_condition}
Let $F$ be a graph on $K$ vertices with minimum degree 1.  
Suppose a graph $G$ with $v(G)\geq K$ 
admits 
an induced complete subgraph $H$ on at least $K-1$ vertices.  
Then $G$ is weakly $F$-saturated. 
\end{lemma}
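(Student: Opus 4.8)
The plan is to produce, for an arbitrary $G$ satisfying the hypotheses, an ordering of the non-edges of $G$ along which each inserted edge lies in a newly created copy of $F$; since the resulting process terminates at $K_{v(G)}$, this is exactly weak $F$-saturation. As $\delta(F)=1$, fix once and for all a leaf $u$ of $F$ with unique neighbour $w$, and write $F^{-}:=F-u$; then $F^{-}$ has $k$ vertices and $\Delta(F^{-})\leq\Delta$, and the hypothesis $\delta(H)\geq\Delta$ with $v(H)=k$ forces $\Delta\leq k-1$.

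I would first isolate a ``clique-growing'' mechanism, to be used repeatedly. Suppose at some stage of the process the current graph $G'$ contains a clique on a vertex set $A$ with $|A|\geq k+1$. Then $G'$ can be completed to $K_{v(G')}$: for any $x\notin A$ and any $a\in A$, inserting the edge $xa$ creates a new copy of $F$. Indeed, choose a $k$-subset $A'\subseteq A$ with $a\in A'$; since $G'[A']$ is a complete graph on $k$ vertices it contains a copy of $F^{-}$ in which $a$ plays the role of $w$, and adjoining $x$ in the role of $u$ gives a copy of $F$ through the new edge $xa$. Running over all $a\in A$ enlarges the clique by $x$, and iterating over the remaining vertices reaches $K_{v(G')}$. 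Hence it suffices to reach from $G$ some clique on $k+1$ vertices, and a short induction on $v(G)$ reduces this to the case $v(G)=k+1$: when $v(G)>k+1$, pick a vertex $z'\notin V(H)$, apply the inductive hypothesis to $G-z'$ (which still has the induced subgraph $H$) to reach a clique on $v(G)-1\geq k+1$ vertices, and then attach $z'$ by the clique-growing mechanism.

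It remains to handle $v(G)=k+1$; by monotonicity of the $F$-bootstrap process we may assume $G$ is exactly $H$ together with one further vertex $z$ that is isolated. The structural input I would establish is: every graph on $k$ vertices with minimum degree at least $\Delta$ contains a spanning copy of $F^{-}$, and such a copy may moreover be required to place $w$ at any prescribed vertex, or to use any prescribed edge. Granting this, fix a spanning copy $\varphi\colon F^{-}\hookrightarrow H$ and insert the edge $z\varphi(w)$; extending $\varphi$ by $u\mapsto z$ is a copy of $F$ using this edge. Next, for each $a\in V(H)$ insert $za$ if missing, using a spanning copy of $F^{-}$ in the current subgraph on $V(H)$ --- whose minimum degree is still at least $\Delta$, as $V(H)$ only gains edges --- with $w$ placed at $a$, extended by $u\mapsto z$; afterwards $z$ is adjacent to all of $V(H)$. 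Finally, for each still-missing edge $xy$ inside $V(H)$, insert it using a spanning copy of $F^{-}$ on $V(H)$ through $xy$, extended by $u\mapsto z$ (legitimate because $z$ is now adjacent to the image of $w$). Then $V(H)\cup\{z\}$ is a clique on $k+1$ vertices, and the proof is complete.

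The clique-growing step and this edge bookkeeping are routine; the real content --- and what I expect to be the main obstacle --- is the embedding statement, that a $k$-vertex graph $H'$ with $\delta(H')\geq\Delta$ contains a spanning copy of $F^{-}$ with the indicated prescriptions. Using that $F^{-}$ is a tree, I would order $V(F^{-})=\{v_1,\dots,v_k\}$ so that each $v_i$ with $i\geq2$ has exactly one earlier neighbour, and embed the $v_i$ into $H'$ greedily in this order; since $\delta(H')\geq\Delta\geq\Delta(F^{-})$, an interior step always has a free admissible image, so the genuine difficulty is the last vertex, where only one vertex of $H'$ remains unused and must turn out to be adjacent to the image of its unique earlier neighbour --- this is what the ordering and the earlier choices have to be engineered to guarantee.
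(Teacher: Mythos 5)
Your overall scheme---join each outside vertex to all of $V(H)$ via copies of $F$ in which the new edge plays the role of a pendant edge, then finish from a $(k+1)$-clique---is the same as the paper's two-step proof, and your clique-growing mechanism and the induction on $v(G)$ are routine and correct. The genuine gap is precisely the step you flag at the end and do not prove: that every $k$-vertex graph $H'$ with $\delta(H')\geq\Delta$ contains a spanning copy of $F^{-}=F-u$ with $w$ placed at a prescribed vertex (or through a prescribed edge). No ordering or greedy strategy can establish this, because the statement is false. Take $F=P_7$, so $k=6$, $\Delta=2$ and $F^{-}=P_6$, and let $H$ be the disjoint union of two triangles: then $\delta(H)=2\geq\Delta$, but the longest path in $H$ has three vertices, so $H$ contains no spanning $P_6$ and no copy of $F$ can be created through an edge from an outside vertex into $V(H)$. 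This example in fact refutes the lemma as stated: if $G$ is two disjoint triangles plus one isolated vertex $z$, then $v(G)=7=k+1$ and $H=G-z$ satisfies the hypotheses, yet adding any single non-edge leaves a graph that is disconnected with no component of size $7$, hence contains no $P_7$ at all, so the bootstrap process cannot even begin and $G$ is not weakly $P_7$-saturated.

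You should know that the paper's own proof commits the same sin: its step (a) simply asserts that ``a new copy of $F$ is created such that a pendant edge is mapped to $vv'$,'' which is exactly the unproved (and false) embedding claim above. The lemma is only ever applied in the paper with $H$ a complete graph on at least $k$ vertices, in which case $K_k$ trivially contains every $k$-vertex graph with any prescribed placement and both your argument and the paper's go through; a correct statement should therefore either take $H$ complete or add a hypothesis guaranteeing the spanning embedding. Two smaller points: your last paragraph assumes $F^{-}$ is a tree, but the lemma only assumes $\delta(F)=1$, so $F^{-}$ can be an arbitrary graph of maximum degree at most $\Delta$; and even when $F$ is a tree the counterexample above shows the terminal step of your greedy embedding genuinely cannot be engineered to succeed.
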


\begin{proof}
Let $F_1$ be a subgraph of $F$ by deleting a leaf $u$ and let $\{u'\}=N_F(u)$. By hypothesis $G$ contains an induced complete subgraph $H$ with $|V(H)|\geq K-1$, so $H$ contains a copy of $F_1$. Moreover, because $H$ is complete, for every vertex $w\in V(H)$ there is an embedding of $F_1$ into $H$ that send $u'$ to $w$.  Now, it suffices to show we can iteratively add edges to the graph $G$, 
until it becomes the complete graph $K_n$. 

\begin{enumerate}[(a)]
\item For any 
    $w\in V(H)$ and $v \in V(G)\setminus V(H)$, 
    we can add the edge $vw$. A new copy of $F$ is created such that $uu'$ is mapped to $vw$. 
\item Now for any 
    $v \in V(G)\setminus V(H)$, $V(H)\subseteq N(v)$, which implies 
    $G[V(H)\cup \{v\}]$ is an induce complete subgraph.
    Then we add all the remaining edges one at a time, 
each of which will create a copy 
    while mapping $uu'$ to the newly added edge. 
    \end{enumerate}
\end{proof}
Proposition~\ref{decrease} follows directly from  Lemma \ref{min_degree_condition}. 
\begin{proof}[Proof of Proposition~\ref{decrease}]
We prove that $\text{w-sat}(n,F)\leq \text{w-sat}(m,F)$ for any $n > m \geq v(F)$. Let $H$ be a weakly  $F$-saturated graph on $m$ vertices.
Define $G=H+\overline{K}_{n-m}$ so that  
$G$ is also weakly $F$-saturated. 
First, one can add edges to $H$ until it forms $K_m$. Then, by Lemma ~\ref{min_degree_condition}, $G$ is weakly $F$-saturated.
\end{proof}

\begin{lemma}\label{d_endstar} (Theorem 9 in~\cite{faudree2013weak})
Let $F$ be a connected graph with $K$ vertices, containing 
at least one end-star.
For any $n$ sufficiently large, 
\begin{equation}
\text{w-sat}(F)\leq K-2+{{\delta_e(F)}\choose 2}.
\end{equation}
\end{lemma}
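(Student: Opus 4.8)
The plan is to exhibit, for all sufficiently large $n$, a weakly $F$-saturated graph $G$ on $n$ vertices with exactly $k-1+{\delta_e(F)\choose 2}$ edges; by Proposition~\ref{decrease} this yields the bound. Write $d=\delta_e(F)$ and fix a minimum end-star: a vertex $c$ with $d-1$ leaf-neighbours $\ell_1,\dots,\ell_{d-1}$ and one further neighbour $u$, so $c$ has degree $d$. Put $F'=F-\{\ell_1,\dots,\ell_{d-1}\}$, a connected graph on $k-d+2$ vertices in which $c$ is a leaf adjacent to $u$; if $F$ is not a tree one first deletes enough of its cycle-edges so that $F'$ becomes a tree, so that $e(F')=k-d+1$ in all cases. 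Now build $G$ on $V(F')\cup\{b_1,\dots,b_{d-1}\}$ together with $n-k-1$ isolated vertices, by placing a copy of $F'$ there, turning $\{c,b_1,\dots,b_{d-1}\}$ into a clique $K_d$, and adding the $d-2$ edges $ub_1,\dots,ub_{d-2}$. These edge-sets are disjoint, so $e(G)=(k-d+1)+{d\choose2}+(d-2)=k-1+{d\choose2}$, and it remains to verify that $G$ is weakly $F$-saturated.

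A few moves are immediate. Since $c$ is joined to all of $b_1,\dots,b_{d-1}$, the copy of $F'$ together with these edges already contains a copy of $F$ (with $\ell_i\leftrightarrow b_i$); and for each isolated vertex $z$ in turn, adding $cz$ creates a copy of $F$ via $\ell_1\mapsto z$, $\ell_2,\dots,\ell_{d-1}\mapsto b_1,\dots,b_{d-2}$, $u\mapsto u$, $F'\mapsto F'$. After this, $c$ is adjacent to every formerly isolated vertex, while the copy of $F'$ and the clique $\{c,b_1,\dots,b_{d-1}\}$ remain intact. The remaining goal is to grow this clique to an induced $K_k$; once that exists, $G$ is weakly $F$-saturated by Lemma~\ref{min_degree_condition}, because a graph containing an end-star has no universal vertex and hence $\Delta(F)\le k-1=\delta(K_k)$.

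To enlarge the clique I would absorb the vertices of $F'\setminus\{c\}$ one at a time, always processing a vertex after its neighbour towards $c$ in $F'$, and always keeping the absorbed vertices together with $b_1,\dots,b_{d-1}$ a clique $K$; after all of $F'\setminus\{c\}$ has been absorbed, $|K|=(k-d+2)+(d-1)=k+1$, which contains $K_k$. To absorb a vertex $v$ whose parent $p$ already lies in $K$, one adds the missing edges from $v$ to $K$ in turn; the copy of $F$ witnessing a new edge $vx$ is found by letting $x$ play the end-star centre, using $vx$ for one of the centre's incident edges, mapping the other $\ell_i$ to vertices of $K$, sending $u$ to a suitable vertex (often $u$ itself), and embedding the rest of $F$, namely $F'-c$, partly inside the complete graph $K$ and partly into the surviving copy of $F'$, chosen to avoid $v$ and the $\ell_i$-images. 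The crux — and the hard part — is exactly this embedding: when $K$ is already large there is plenty of room inside the complete graph $K$, but in the first few steps $K$ is small, and the part of $F'$ lying behind $v$, which may be a sizeable subtree, must be reconstructed elsewhere. This is where the hypothesis $\delta_e(F)=d$ is used: it forces $F'$ to be bushy (for $d\ge 3$, every leaf of $F'$ other than $c$ has a support of degree at least $3$), so that a careful choice of absorption order keeps every piece that must be re-routed small enough to fit, either into $K$ or along the surviving backbone of $F'$. Carrying out this bookkeeping — and, in the non-tree case, re-inserting each deleted cycle-edge as soon as two copies of the corresponding graph $F$-with-one-edge-removed become available inside the clique — is the bulk of the proof; everything else is routine.
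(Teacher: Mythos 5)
Your construction has the right edge count, $(k-d+1)+\binom{d}{2}+(d-2)=k-1+\binom{d}{2}$, and the first waves of additions (the edges $cz$ to the isolated vertices, witnessed by relocating one leaf of the end-star) are correctly justified. But the proof has a genuine gap exactly where you flag it: the claim that the clique $\{c,b_1,\dots,b_{d-1}\}$ can be grown to an induced $K_k$ by ``absorbing'' the vertices of $F'$ is never established. When you add an edge $vx$ with $x$ in the current clique $K$ and $|K|$ still of size about $d$, the witnessing copy of $F$ must re-embed the whole subtree of $F'$ hanging behind $v$ into $K$ together with the surviving fragment of $F'$; since at that stage the formerly isolated vertices are adjacent only to $c$ and to each other not at all, this amounts to embedding a possibly long, path-like subtree into a graph whose only dense part has roughly $d$ vertices. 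The appeal to ``bushiness'' from $\delta_e(F)=d\ge 3$ does not rescue this: that hypothesis constrains the degrees of leaf-supports, not the internal structure of $F'$, which can contain arbitrarily long induced paths far from any leaf. You yourself call this bookkeeping ``the bulk of the proof,'' and it is precisely the part that is absent; as written the argument does not go through. Two smaller defects: for non-trees, deleting cycle-edges of $F$ to form $F'$ destroys the property that your starting graph contains $F$ minus an edge, and the promised re-insertion of those edges is unjustified; and the claim that a graph containing an end-star has no universal vertex fails for stars.

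For comparison, the paper does not prove this lemma at all: it cites it as Theorem 9 of Faudree--Gould--Jacobson and remarks only that steps (d)--(h) of the proof of Proposition~\ref{endd-mind} clarify the argument. That strategy is genuinely different from yours. It starts from a copy of $F\setminus\{e\}$ together with a $K_{\delta_e}$ on new vertices $u_1,\dots,u_{\delta_e}$ and percolates \emph{outward} through the new vertices: first the end-star centre $v_1$ is joined to every $u_i$ (each $u_i$ playing a leaf), then the non-leaf neighbour $v_2$ is joined to every $u_i$ (each $u_i$ now able to play $v_1$, with $v_1$ and the other $u_j$ playing the leaves), then every isolated vertex is joined to every $u_i$, and only then are edges among the isolated vertices added. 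Every witnessing copy keeps the original copy of $F$ beyond $v_2$ intact and relocates only the end-star and its immediate neighbourhood, so no large subtree ever has to be rebuilt inside a small clique --- which is the step your absorption route cannot avoid. If you want a self-contained proof you should adopt that outward-percolation order (or follow the cited source) rather than trying to absorb $V(F')$ into the clique.
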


We obtain an upper bound estimate that, in certain cases, improves upon the result of the above lemma.

\begin{prop}\label{endd-mind}
Let $F$ be a graph on $K$ vertices containing at least an end-star. Let $u$ and $w$ be adjacent non-pendent vertices in $F$. Let $S$ be the union of the neighbors of $u$ and $w$ that are neither adjacent to any leaf nor equal to $u$ or $w$. Then, we have $\text{w-sat}(F)\leq K-2+|S|\delta_e$.
\end{prop}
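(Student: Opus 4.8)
I would prove this by exhibiting, for every sufficiently large $n$, a weakly $F$-saturated graph on $n$ vertices with exactly $k-1+|S|\delta_e$ edges; since $\text{w-sat}(F)=\lim_{n\to\infty}\text{w-sat}(n,F)$ by Proposition~\ref{decrease}, this gives the bound. Write $\delta_e=\delta_e(F)$ and fix an end-star attaining it, with centre $c$, so that $\deg_F c=\delta_e$, exactly $\delta_e-1$ neighbours of $c$ are leaves, and (because $F$ contains the adjacent non-pendent pair $u,w$, hence is not a star) $c$ has exactly one non-leaf neighbour; fix a leaf neighbour $z$ of $c$, and note $c\notin\{u,w\}\cup S$. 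The graph $G$ I would use is: a copy of $F$ with the pendant edge $cz$ removed (contributing $e(F)-1$ edges, which is $k-1$ for a tree), together with, for each $s\in S$, a set $Z_s$ of $\delta_e$ new vertices each joined only to $s$ (the extra $|S|\delta_e$ edges), plus enough isolated vertices to make $n$ vertices in all; then $e(G)=k-1+|S|\delta_e$.

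To see $G$ is weakly $F$-saturated it is enough to percolate $G$ to a graph containing a $k$-clique, after which the percolation to $K_n$ is immediate (cf.\ Lemma~\ref{min_degree_condition}). I would do this by successively ``opening'' vertices, i.e.\ making a vertex adjacent to all current vertices, in a carefully chosen order, checking at each added edge $xy$ that it closes a new copy of $F$ by designating one edge of $F$ mapped to $xy$ and embedding the rest of $F$ into the current graph. First open $c$: the edges $c\xi$ with $\xi$ isolated are added one by one, each closing a copy of $F$ via $z\mapsto\xi$ and $F-z$ into its copy in $G$; afterwards a genuine copy of $F$ is present. Next open every leaf-adjacent vertex of $F$ in the same manner (re-attaching one of its leaves to the isolated vertices). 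Then open every $s\in S$: here the $\delta_e$ pendants $Z_s$ serve as the leaves of an end-star whose centre is mapped to $s$, the remaining edge of that end-star being the new edge, while the rest of $F$ is absorbed into the already-opened vertices and the remaining isolated vertices. At this point every neighbour of $u$ other than $w$, and every neighbour of $w$ other than $u$, has been opened (each is a leaf of $u$ or $w$, or a leaf-adjacent vertex, or a member of $S$), so one can open $u$ by sending each new edge $u\xi$ onto the edge $uw$ with $\xi$ playing $w$, and likewise open $w$. Finally, with a sizeable opened set $U$ and a huge set of formerly isolated vertices between which the graph is complete bipartite, one fills in the missing edges inside $U$ and then adds edges among the formerly isolated vertices until $k$ of them form a clique.

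The genuinely hard part is the bookkeeping of this percolation: at each step one must produce an explicit copy of $F$ using the new edge, and the order of openings together with the decorations $Z_s$ must be arranged so that whenever a vertex is opened there are already enough high-degree vertices to host the part of $F$ away from the new edge. The role of the term $|S|\delta_e$ is exactly this: a neighbour of $u$ or $w$ that is not supported by a leaf cannot be opened by ``re-growing a leaf'', so it is equipped with $\delta_e$ artificial pendants that let it play the centre of an end-star instead. When $S=\varnothing$ the construction is simply $F$ with one pendant edge deleted, and the proposition reduces to the statement that such an $F$ is a good tree, the mechanism being precisely that once $c$, all leaf-adjacent vertices, and then $u$ and $w$ (via the edge $uw$) have been opened, the remainder of the percolation goes through.
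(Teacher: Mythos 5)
Your construction is not the paper's, and the difference is fatal rather than cosmetic. The paper takes a \emph{single} auxiliary set $U$ of $\delta_e$ new vertices and joins it completely to $S$ (a copy of $K_{\delta_e,|S|}$), so that after one preliminary round each $u_i\in U$ is adjacent to \emph{all} of $N(u)\cup N(w)\setminus\{u,w\}$ simultaneously; this lets $u_i,u_j$ substitute for the pair $u,w$ in a single copy of $F$ (creating the edges inside $U$), and then $U$ acts as a universal supply of end-star leaves that absorbs the isolated vertices into a clique. Crucially, the paper never ``opens'' the vertices of $S$ at all. You instead give each $s\in S$ its own private set $Z_s$ of $\delta_e$ pendants; these have degree $1$ and are adjacent to nothing but $s$, so they can never play the role of the $u_i$'s, and your plan is forced to fully open every $s\in S$ before touching $u$ and $w$.

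That opening step is where the argument breaks. When you add $s\xi$ with the end-star centre mapped to $s$ and its non-leaf neighbour $c'$ mapped to the isolated vertex $\xi$, the remainder of $F$ must embed into a graph in which the formerly isolated vertices are pairwise non-adjacent and adjacent only to the opened vertices; hence the vertices of $F\setminus(\text{end-star})$ sent to opened vertices must form a vertex cover of that subtree avoiding $c'$. But at that stage the opened set is only $c$ together with the leaf-adjacent vertices of $F$, which can be far too small: take $F$ to be a path $v_1\cdots v_{10}$ with three extra leaves at $v_2$ and at $v_9$, and $u=v_5$, $w=v_6$, so $S=\{v_4,v_7\}$. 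Only two vertices ($v_2$, $v_9$) are ever opened before this step, while $F$ minus the end-star at $v_2$ contains a path on seven vertices needing a cover of size three; no copy of $F$ through $s\xi$ of the form you describe exists. Re-routing part of $F$ back through the original copy $H$ creates collisions (e.g.\ $s$ itself lies in $F\setminus(\text{end-star})$ and would be used twice) that you do not address. The same shortage of ``universal'' vertices undermines your final step of building a clique among the formerly isolated vertices, which needs $\delta_e-1$ vertices adjacent to everything to serve as the other end-star leaves --- exactly what the paper's $U$ provides and your $Z_s$'s do not. To repair the proof you should adopt the paper's $K_{US}$ gadget and its order of operations.
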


\begin{proof}
Let $U=\{u_1, u_2,\dots, u_{\delta_e}\}$ be a set of $\delta_e$ vertices, and let $K_{US}$ be a complete bipartite graph with vertex set $U\cup S$. Let $e$ be any edge in $F$. We construct the graph $G_1$ as the union of $F\setminus e$ and $K_{US}$. Then let $G=G_1+\overline{K}_{n-K-|S|\delta_e}$, where $n\geq 2K+|S|\delta_e$. We will prove that $G$ is $F$-weakly saturated by adding edges to $G$ in the following order:
\begin{enumerate}[(a)]
\item Add $e$.
\item For any neighbor of $u$ or $w$ that is adjacent to a leaf, namely $v$, add $vu_k$ for every $1\leq k\leq \delta_e$ to replace the leaf.
\item Now, for every $1\leq k\leq \delta_e$, $u_k$ is adjacent to all the neighbors of $u$ and $w$ (except $u$ and $w$). We can add $u_iu_j$ to replace $uw$ for each $1\leq i<j\leq \delta_e$.
\item Let $v_1$ be the isomorphic image of the center of the smallest end-star of $F$. We can add $v_1u_k$ for $1\leq k\leq \delta_e$ to replace a pendent edge at $v_1$.
\item Let $v_2$ be the only initially non-leaf neighbor of $v_1$. We can add $v_2u_i$ for any $1\leq i\leq \delta_e$ to replace $v_1$ by $u_i$, and the pendent vertices at $v_1$ by $v_1$ and $u_j$ ($j\neq i$).
\item For any initially isolated vertex $p\in V(\overline{K}_{n-K-|S|\delta_e})$, add $pu_i$, $1\leq i\leq \delta_e$ to replace $v_1$ by $u_i$, and the pendent vertices at $v_1$ by $p$ and $u_j$ ($j\neq i$).
\item For any $p_1, p_2\in V(\overline{K}_{n-K-|S|\delta_e})$, add $p_1p_2$ to replace $v_1$ by $p_1$, and the pendent vertices at $v_1$ by $u_i$ ($1\leq i\leq\delta_e$).
\item Now we have obtained a sufficiently large complete graph. By Lemma~\ref{min_degree_condition}, we can continue adding edges until we obtain $K_n$.
\end{enumerate}
\end{proof}

\begin{Remark}
Steps (d) to (h) clarify the proof of Lemma~\ref{d_endstar}.	
\end{Remark}

\section{A Case Study: Caterpillars}
\label{sec:Caterpillar}
Fix nonnegative integers $a_1, a_2,\ldots,a_\ell$, 
and define a caterpillar $C_{a_1, a_2,\ldots, a_\ell}$ as  
the tree obtained by adding $a_t$ pendant edges to the $t$-th vertex of a path $P_\ell$, where $1\leq t\leq \ell$. 
In this section, we estimate weak saturation numbers for 
caterpillars, providing tight results in many situations. As an application, we also prove Theorem~\ref{exponents}.
We begin with a useful lower bound estimate.

\begin{prop}\label{catwsat}
Consider a caterpillar $T=C_{a_1, a_2,\ldots, a_\ell}$ on $K$ vertices.
Suppose that $a_k+a_{k+1}>0$ for all $1\leq k\leq \ell-1$.
Define $a = \min \{  a_t \big|  1\leq t\leq \ell \text{ and } a_t\neq 0\}$.
Then, 
$$\text{w-sat}(T)\geq K-2+{{a-1}\choose 2}.$$
\end{prop}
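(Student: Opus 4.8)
The goal is a lower bound on the weak saturation number, so I need to exhibit an obstruction (a "certificate") that shows no weakly $T$-saturated graph $G$ on $n$ vertices can have fewer than $k-1+\binom{a-1}{2}$ edges. The standard technique here is the linear-algebra / matroid method: assign to each potential edge $e$ of $K_n$ a vector $w_e$ in some vector space $W$ such that whenever an edge $e$ is added and creates a new copy of $T$, the vector $w_e$ lies in the span of the vectors $w_{e'}$ over the edges $e'$ of that copy other than $e$. If such an assignment exists with the property that the vectors $\{w_e : e \in E(K_n)\}$ span a space of dimension at least $D$, then every weakly $T$-saturated graph must already have at least $D$ edges, since the bootstrap process can never increase the span. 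So the plan reduces to constructing a good weight assignment with $D = k-1 + \binom{a-1}{2}$.

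The first step is to understand the structure forced by a new copy of $T$. Since $a = \min\{a_t : a_t \neq 0\}$ and $T$ is nondegenerate enough that consecutive spine vertices are not both leaf-free ($a_k + a_{k+1} > 0$), every embedding of $T$ into a graph must use some spine vertex carrying at least $a$ pendant leaves, hence a vertex of degree at least $a+1$ that is adjacent to at least $a$ leaves (vertices of degree $1$ in that copy). The key local object is thus an induced star $K_{1,a}$ whose center has degree $\geq a+1$: when we add an edge $e$ completing a copy of $T$, the newly created edge, together with the other edges at this high-degree spine vertex, must contain an end-star. I would split the weight space as $W = W_1 \oplus W_2$ where $\dim W_1 = k-1$ handles the "tree part" (this piece should essentially reproduce the trivial bound $e(T)-1 = k-1$ and can be taken so that the spanning edges of any single copy of $T$ minus one already span $W_1$, which is automatic because $T$ minus an edge is a forest on $k+1$ vertices with $k-1$ edges — here I'd use a generic/independent assignment on a spanning structure), and $\dim W_2 = \binom{a-1}{2}$ is the genuinely new contribution that must be "protected" against being spanned.

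For the $W_2$ part, the idea is to exploit that a new end-star at the center creates only limited linear relations among the edges incident to that center. Concretely, fix the center vertex $c$ of the (a realized copy's) minimal-size pendant cluster; among the $a$ leaves attached to $c$ plus $c$ itself we have an induced $K_{1,a}$, and I want to assign to the edges of $K_n$ vectors indexed so that the $a$ leaf-edges at any such center span only an $a-1$ dimensional space of a fixed $\binom{a-1}{2}$-dimensional space attached to the "link" of $c$ — mimicking the classical $\text{w-sat}(n,K_k)$ argument restricted to the neighborhood. The cleanest route is probably to reduce to (or directly adapt) the known lower bound for $\text{w-sat}(K_{1,a}$-type configurations$)$, i.e. to show that hidden inside any $T$-bootstrap process on the neighborhood of a fixed vertex there is a $K_{a-1}$-bootstrap process, whose weak saturation number contributes the $\binom{a-1}{2}$, and then add $k-1$ for the rest of the tree via disjoint coordinates. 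I would make this precise by choosing $W_2 = \bigwedge^2 U$ for an $(a-1)$-dimensional space $U$ with a generic vector $u_x \in U$ for each vertex $x$, setting the $W_2$-component of the edge $xy$ to $u_x \wedge u_y$, and checking that adding an edge that completes a copy of $T$ — hence completes an end-star $K_{1,a}$ at some center $c$ — keeps the wedge vector in the span of the others because $a$ vectors in an $(a-1)$-space are dependent (this is exactly Kalai's/Alon's exterior-algebra argument for complete graphs). Counting: the $W_2$-components of all edges of $K_n$ span all of $\bigwedge^2 U$, dimension $\binom{a-1}{2}$, giving the bound together with the $k-1$ from $W_1$.

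The main obstacle I anticipate is verifying the span condition \emph{simultaneously} for both blocks along the \emph{same} bootstrap step — i.e. making sure that the $W_1$-coordinates (which must certify the "forest" part) and the $W_2$-coordinates (certifying the star part) are compatible, since a single added edge $e$ must have its full vector $w_e = (w_e^{(1)}, w_e^{(2)})$ in the span of the other edges of one common copy of $T$. This requires that the chosen copy of $T$ realizing the new edge contains both enough tree-edges to span $W_1$'s relevant relation and the end-star at a minimum-cluster center to span $W_2$'s relation; the hypothesis $a_k + a_{k+1} > 0$ and the definition of $a$ are exactly what guarantees such a center is present in every copy, so the argument should go through, but pinning down that the two spanning requirements use disjoint enough edge sets (or can be met within one copy) is the delicate bookkeeping step. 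A secondary subtlety is handling the case $a \leq 2$, where $\binom{a-1}{2} = 0$ and the statement degenerates to the trivial $e(T)-1$ bound — this is consistent with Theorem~\ref{goodcat}(1) and needs no extra work.
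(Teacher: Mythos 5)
Your plan takes a genuinely different route from the paper --- the paper's proof is purely combinatorial: it fixes a copy of $T\backslash\{e\}$ in $G$, lets $V_1$ be the image of the spine vertices carrying leaves, shows by a degree-counting argument on the first edge added inside $V_2=V(G)-V_1$ (iterated with vertex deletions) that $G[V_2]$ alone contains $\binom{a-1}{2}$ edges, and adds these to the $k-1$ edges of the fixed copy, which all meet $V_1$ and are therefore disjoint from them. As written, however, your proposal has two genuine gaps. The more serious one is the $W_1$ block. You assert that a $(k-1)$-dimensional block certifying the ``tree part'' can be taken ``so that the spanning edges of any single copy of $T$ minus one already span $W_1$,'' and that this is ``automatic because $T$ minus an edge is a forest with $k-1$ edges.'' It is not: the trivial bound $\text{w-sat}(T)\geq e(T)-1$ comes from the observation that the first added edge forces $G$ to contain a copy of $T\backslash\{e\}$, and this is not a statement of the form ``the edge vectors span a $(k-1)$-dimensional space.'' You give no assignment $e\mapsto w_e^{(1)}$ satisfying the copy-span condition whose total span has dimension $k-1$, and it is exactly this block that would have to encode the disjointness that makes the bound additive; without it your method yields at best $\binom{a-1}{2}$, not the sum. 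The second gap is the one you flag yourself: in the direct sum $W_1\oplus W_2$ each bootstrap step requires a single linear combination $w_e=\sum_{e'}\lambda_{e'}w_{e'}$ over the other edges of the new copy, valid in both coordinates with the \emph{same} coefficients; establishing separate dependencies in each block is not enough, and you leave this unresolved.

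The $W_2$ part, by contrast, is essentially sound, though your justification keys on the wrong vertex: you argue via ``an end-star $K_{1,a}$ at some center $c$'' present in the new copy, but the added edge $e=xy$ need not be incident to that center, in which case a dependence among the edges of that end-star says nothing about $u_x\wedge u_y$. What saves the exterior-algebra step is the stronger fact (following from $a_t+a_{t+1}>0$ and the definition of $a$) that \emph{every} edge of $T$ has an endpoint of degree at least $a+1$ in $T$; taking $x$ to be that endpoint, its at least $a$ other neighbours in the copy carry generic vectors spanning the $(a-1)$-dimensional space $U$, so $u_x\wedge u_y$ lies in the span of the wedges of the other edges at $x$, with coefficients supported on those edges. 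If you want to pursue this route you must state and use that fact explicitly, construct $W_1$ concretely (or abandon the direct sum and obtain the $k-1$ term combinatorially, as the paper does, taking care that the two edge sets counted are disjoint), and resolve the common-coefficient issue. As it stands, the proposal is a programme with the decisive steps missing rather than a proof.
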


\begin{proof} As $\text{w-sat}(T)\geq e(T)-1=K-2$, we may assume that $a\geq 3$.
List all non-leaf vertices of $T$, namely $\{v_1,\ldots,v_\ell\}$.
Define $Q=\{v_t|a_t>0\}$ and let $q=|Q|$.
Consider any weakly $T$-saturated graph $G$ with at least $a+q-1$ vertices. 
Fix an isomorphic copy of $T\setminus e$ in $G$, and 
define $V_1$ as the image of $Q$, while $V_2=V(G)\setminus V_1$. Then $|V_2|$ contains at least $a-1$ vertices. If $V_2$ is a complete graph, the proof is complete. Otherwise, we make the following claim. 

\noindent{\bf Claim}. 
There exists a vertex $u_1\in V_2$ such that $d_{G[V_2]}(u_1)\geq a-2$.
\begin{proof}[Proof of Claim]
Consider the $T$-bootstrap percolation process starting from $G$. 
Let $e'$ be the first edge added with both endpoints in $V_2$, creating a copy of $T$.
Since $a_k+a_{k+1}>0$ holds for all $1\leq k\leq \ell-1$, we deduce that each edge of $T$ has at least one endpoint in $Q$. Therefore, when creating the new copy, at least one element of $Q$, as an endpoint of $e'$, is mapped to $V_2$. 
Further, we assume that exactly $q_1>0$ 
elements of $Q$ are mapped to $V_2$, and that 
these vertices are adjacent to a total of $s$ leaves in the copy.
The remaining $q-q_1$ elements of $Q$
are mapped to $V_1$. Since $|V_1|=q$, at most $q_1$ vertices in the copy can be leaves. Thus, at least $s-q_1$ leaves are in $V_2$, implying that  in $G[V_2]$, there exist $q_1$ vertices collectively adjacent to at least $s-q_1$ edges. Therefore, some vertex $u_1\in V_2$
satisfies:
$$d_{G[V_2]}(u_1) \geq \frac{s-q_1}{q_1} \geq \frac{q_1a-q_1}{q_1}= a-1.$$  
Before adding $e'$, we have $d_{G[V_2]}(u_1)\geq a-2$, proving the claim. 
\end{proof}

Continuing with the proof, we iteratively define vertices  
$u_k$ and $V_{2,k}=V_{2,k-1}-\{u_k\}$ such that $d_{G[V_{2, k-1}]}(u_{k})\geq a-1-k$, 
where $k=1,2,\dots,a-2$ and $V_{2,0}=V_2$. Consequently, the number of edges in $G[V_2]$
is at least 
$e(G[V_2])\geq (a-2)+(a-3)+\dots+1={{a-1}\choose 2}.$

Since $a_k+a_{k+1}>0$ for every $k=1,\dots,\ell-1$, 
every edge in $T$ has at least one endpoint in $Q$. 
It follows that 
$e(G[V_1])+e(G[V_1, V_2])\geq e(T)-1=K-2$, leading to the total edge count: 
$e(G[V_1])+e(G[V_1,V_2])+e(G[V_2])\geq K-2+{{a-1}\choose 2}.$
\end{proof}

\begin{Remark} In ~\cite{faudree2013weak}, Theorem 21,
 items (1) and (4) provide matching upper bounds in the following cases:
\begin{enumerate}
\item $a_j=a$ and both $a_{j-1},a_{j+1}> 0$, or
\item $a_1=a$ and $a_2 > 0$, or $a_{\ell}=a$ and $a_{\ell-1}>0$.
\end{enumerate}
Thus, it follows that 
$\text{w-sat}(C_{a_1,\dots,a_\ell})= K-2 +{a-1\choose 2}$. 
\end{Remark}

Moreover, we are ready to prove Theorem~\ref{goodcat}. 
\begin{proof}[Proof of Theorem~\ref{goodcat}] For (1), if $T$ is a good tree, the result follows from Proposition~\ref{catwsat}. If $a\leq 2$, it follows from Theorem 21 (1) and (4) in \cite{faudree2013weak}. 

For (2), the lower bound is a direct consequence of Proposition~\ref{catwsat}, while the upper bound follows from Theorem~21 (1) and (4) in~ \cite{faudree2013weak}.
\end{proof}

We can now present the proof of Theorem~\ref{exponents}. 
\begin{proof}[Proof of Theorem~\ref{exponents}]
Fix $\alpha\in [1,2]$. 
For sufficiently large $K$, set $a_K=\lfloor \frac{1}{3}K^{\frac{\alpha}{2}} \rfloor$ so that $a_K>2$ and $K-a_K-2>a_K$. Define the tree 
 $T_K=C_{a_K, K-a_K-2}$ on $K$ vertices.
 Then
$\text{w-sat}(T_K)=K-2+{a_K-1\choose 2}=\Theta(K^\alpha)$.   
\end{proof}

\begin{Remark}\label{extend} Recall that an internal tree is the subgraph of a tree obtained by deleting its leaves.
This concept allows us to generalize the above results as follows.

For any tree $T$, let $I(T)$ denote the internal tree of $T$, and let $U$ be the set of vertices in $T$ adjacent to the leaves of $T$. For each $v\in U$, let $d_\ell(v)$ be the number of leaves in its  neighborhood, and define $d=\min \{ d_\ell(v) \, \big| v \in U \}$.
\begin{enumerate}[(a)]
\item If $U$ dominates $I(T)$, then $\text{w-sat}(T)\geq v(T)-2+{{d-1}\choose 2}.$
\item If $V(I(T))=U$, then $\text{w-sat}(T)=v(T)-2+{{d-1}\choose 2}.$
\end{enumerate}
The proof is very similar to that of Theorem~\ref{goodcat} and Proposition~\ref{catwsat}, so we omit it.
\end{Remark}

The weak saturation numbers for the following special caterpillars can also be determined.  
\begin{prop}\label{3cat}
Let $T=C_{a_1,0,a_2}$, where $a_2\geq a_1>0$ and note it has $a_1+a_2+3$ vertices. 
Then 
$ \text{w-sat}(T)=v(T)-2+{{a_1}\choose 2}=a_1+a_2+1+{{a_1}\choose 2}$.
\end{prop}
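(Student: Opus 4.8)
The goal is to establish the two matching bounds $\text{w-sat}(T)\geq v(T)-2+\binom{a_1}{2}$ and $\text{w-sat}(T)\leq v(T)-2+\binom{a_1}{2}$. Write $v_1,v_2,v_3$ for the spine of $T$, with $a_1$ leaves at $v_1$, $a_2\geq a_1$ leaves at $v_3$, and $\deg_T(v_2)=2$; put $Q=\{v_1,v_3\}$, so $|Q|=2$. The assertion for $a_1\leq 1$ is that $T$ is a good tree, and this follows from Theorem~\ref{goodtree}: all leaves of $T$ lie at pairwise even distance, and a leaf at $v_1$ is adjacent to the degree-$2$ vertex $v_1$. So assume $a_1\geq 2$.

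For the lower bound I would follow the proof of Proposition~\ref{catwsat}, but exploit $\deg_T(v_2)=2$ to gain one extra edge at every peeling step, so that $\binom{a_1-1}{2}$ improves to $\binom{a_1}{2}$. Fix a weakly $T$-saturated graph $G$ together with a copy of $T\setminus e$ in it; let $V_1$ be the image of $Q$ (so $|V_1|=2$) and $V_2=V(G)\setminus V_1$. Since every edge of $T$ meets $Q$, the copy already contributes $e(T)-1$ edges to $G[V_1]\cup G[V_1,V_2]$, so it is enough to show $e(G[V_2])\geq\binom{a_1}{2}$. For $k=1,\dots,a_1-1$, consider the first edge $f$ created during the $T$-bootstrap process with both endpoints in $V_2\setminus\{u_1,\dots,u_{k-1}\}$; if no such $f$ exists, then $G[V_2]$ already contains the clique on $V_2\setminus\{u_1,\dots,u_{k-1}\}$, which has far more than $\binom{a_1}{2}$ edges. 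In the new copy of $T$ created by $f$, let $s\in\{1,2,3\}$ be the number of spine images lying in $V_2$; as $|V_1|=2$, exactly $3-s$ spine images lie in $V_1$, hence at most $s-1$ leaves do, so at least $a_1+a_2-(s-1)$ leaves lie in $V_2$. A case check on how the three spine images split between $V_2$ and the two vertices of $V_1$ then yields a vertex $u_k\in V_2\setminus\{u_1,\dots,u_{k-1}\}$, namely an image of $v_1$ or of $v_3$, with at least $a_1-k$ neighbors in $V_2\setminus\{u_1,\dots,u_{k-1}\}$; these neighbors are present already in $G$, since $f$ is the first internal edge on that vertex set. The $a_1-1$ edge bundles are pairwise disjoint, so $e(G[V_2])\geq\sum_{k=1}^{a_1-1}(a_1-k)=\binom{a_1}{2}$ and $e(G)\geq v(T)-2+\binom{a_1}{2}$.

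For the upper bound, fix $n$ large and take $G=(T\setminus e)\cup K_{a_1}\cup\overline{K}_{n-v(T)-a_1}$, the disjoint union of a copy of $T$ minus an edge, a clique on fresh vertices $u_1,\dots,u_{a_1}$, and isolated vertices. This has $(e(T)-1)+\binom{a_1}{2}=v(T)-2+\binom{a_1}{2}$ edges, so it remains to show $G$ is weakly $T$-saturated. Writing $v_i'$ for the image of $v_i$, add edges in the order: (a) restore $e$, recovering a copy of $T$; (b) for each fresh vertex $w$ add $wv_1'$ and $wv_3'$, each time embedding $T$ with a pendant edge at $v_1$ or at $v_3$ mapped onto the new edge; (c) for each $i$ add $u_iv_2'$, realized by the embedding $v_1\mapsto u_i$ with its $a_1$ leaves placed on $\{u_t:t\neq i\}\cup\{v_1'\}$, $v_2\mapsto v_2'$, and $v_3\mapsto v_3'$ with its leaves on the original leaves at $v_3$; (d) successively add $wu_i$ (for fresh $w$), then $wv_2'$, then $w_1w_2$ for fresh $w_1,w_2$, each time taking a fresh vertex as a leaf of $u_i$ (respectively, letting $v_2'$ or $w_1$ play $v_1$), where $K_{a_1}$ together with the edges added in (b)--(c) supplies the required $a_1$ leaves and the spine edge $v_2'v_3'$. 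After step (d), the set $\{v_2'\}\cup\{u_1,\dots,u_{a_1}\}\cup V(\overline{K}_{n-v(T)-a_1})$ induces a clique on at least $v(T)-1$ vertices, so Lemma~\ref{min_degree_condition} lets us complete the process to $K_n$.

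The main obstacle is the lower bound, namely carrying the gain of $1$ over Proposition~\ref{catwsat} through all $a_1-1$ rounds of peeling: when we look at the first internal edge $f$ of $V_2\setminus\{u_1,\dots,u_{k-1}\}$ and the copy of $T$ it creates, the previously chosen vertices may reappear in that copy as leaves (each such occurrence costing $u_k$ one usable neighbor) or as spine images (costing none), while $f$, being disjoint from $\{u_1,\dots,u_{k-1}\}$, is incident to a spine image outside that set; one must run through the several ways the three spine images split between $V_2$ and the two slots of $V_1$ and verify that in each case a suitable $u_k$ still has at least $a_1-k$ available neighbors. The upper-bound construction is comparatively routine once step (c) is verified, and it is precisely step (c) that forces a clique on $a_1$ rather than $a_1-1$ vertices, which is the combinatorial shadow of $v_2$ carrying no leaves.
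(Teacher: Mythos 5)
Your argument is correct, and its skeleton matches the paper's: both proofs isolate the two spine ends (the only vertices of degree $\geq a_1+1$, which together cover every edge of $T$), charge $v(T)-2$ edges to them, and then peel the remaining vertex set to extract $\binom{a_1}{2}$ further edges by examining the copy of $T$ created by the first edge added inside that set. The case check you flag as the main obstacle does go through: since every edge of $T$ meets $\{v_1,v_3\}$ and $v_1v_3$ is a non-edge, the new edge $f$ is incident to exactly one spine-end image, so either the other spine-end image lies in $W=V_2\setminus\{u_1,\dots,u_{k-1}\}$ with all of its $\geq a_1+1$ copy-edges already present and at most $k+1$ of its neighbours outside $W$, or exactly one spine-end image lies in $W$, in which case the other one occupies a forbidden slot without being its neighbour, leaving at most $k$ neighbours outside $W$ and costing one more edge for $f$; in every case the chosen vertex retains at least $a_1-k$ original neighbours in $W$. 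Where you genuinely diverge is in presentation rather than substance: the paper finds the degree-$(a_1-1)$ vertex by contradiction (if all degrees in $H$ were $\leq a_1-2$, the new copy could not contain two vertices of degree $\geq a_1+1$), whereas you locate it directly as a spine-end image; and for the upper bound the paper simply cites Theorem 21(5) of Faudree--Gould--Jacobson, while you give an explicit weakly saturated graph $(T\setminus e)\cup K_{a_1}\cup \overline{K}_{n-v(T)-a_1}$ and verify the percolation order. Your self-contained upper bound is a worthwhile addition (it parallels the construction the paper uses for Theorem~\ref{goodcat}(2), with the clique enlarged from $a-1$ to $a_1$ vertices precisely because $v_2$ carries no leaves, as you observe); the only polish needed is to write out the embedding for the steps $wv_2'$ and $w_1w_2$ explicitly, e.g.\ mapping the spine to $v_2'\,u_j\,v_3'$ with $u_j$ the unused clique vertex, since the phrase ``the spine edge $v_2'v_3'$'' does not quite describe the embedding that actually works there.
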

\begin{proof}
The upper bound follows from Theorem 21, item (5) in~\cite{faudree2013weak}. For the lower bound, suppose $G$ is a weakly $T$-saturated graph. 
Note that there exist vertices $v_1,v_2\in V(G)$ such that 
together they are adjacent to at least $v(T)-2$ vertices. 
Let $H=G[V(G)\setminus \{v_1, v_2\}]$. 

We \textbf{claim}
that there exists a vertex $w\in V(H)$ with $d_H(w)\geq a_1-1$. 
Assume for contradiction it is not true. 
Suppose that in the $T$-bootstrap percolation process, 
the first edge added to $H$  is $e=w_1w_2$, which creates a copy of $T$.
In that copy, at least one of these two ends of $e$, say $w_1$, 
has degree at least $a_1+1$. Initially before adding the edge, by assumption, 
$d_H(w_1)\leq a_1-2$. It follows that  $v_1, v_2, w_2$, and all vertices in the initial $N_H(w_1)$ must be  adjacent to $w_1$ in the copy. 
Nevertheless, any vertex $u$ not listed above must belong to $V(H)$,
 and therefore $d_G(u)\leq d_H(u)+2\leq a_1$, no vertex can serve as the other vertex whose degree is at least $a_1+1$ in the copy, leading to a contradiction. 

Following the claim, we obtain a vertex  $w\in V(H)$ 
with $d_H(w)\geq a_1-1$. Let $H_1=H[V(H)\setminus \{w\}]$. Using similar  arguments, we show that there exists a vertex $w'$ in $H_1$ such that $d_{H_1}(w')\geq a_1-2$. Inductively, we can choose a sequence of vertices and eventually it implies that 
$e(H)\geq (a_1-1)+(a_1-2)+\dots+1={{a_1}\choose 2}$. Thus, $e(G)\geq v(T)-2+e(H)\geq v(T)-2+{{a_1}\choose 2} $.   
\end{proof}

The following is an immediate corollary. 
\begin{coro} For every tree $T$ with diameter at most $4$, $\text{w-sat}(T)$ is  determined.
\end{coro}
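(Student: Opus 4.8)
The plan is to reduce the corollary to the already-established results for small-diameter trees. First I would observe that a tree $T$ with diameter at most $4$ has a very restricted shape: it has a \emph{center}, either a single vertex $c$ (diameter $\leq 4$, even case) or an edge (diameter $\leq 3$, odd case), and every leaf is at distance at most $2$ from the center. If the diameter is at most $2$, then $T$ is a star $K_{1,m}$, whose weak saturation number is classical and trivially $m-1=e(T)-1$ (indeed $T$ is good by Theorem~\ref{goodtree}, since any leaf is adjacent to the center, and when $m\geq 2$ one may also note it directly). If the diameter is $3$, then $T$ is a double star, which is the caterpillar $C_{a_1-1,a_2-1}$ (a path $P_2$ with pendant edges at both ends); here Theorem~\ref{goodcat} applies once we check the parameter, and in fact $T$ is always good unless both endpoints carry at least three pendant leaves — wait, in the diameter $3$ case one of $a_1-1, a_2-1$ could be forced small, but in general Theorem~\ref{goodcat}(2) gives the exact value.

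The remaining case is diameter exactly $4$, where $T$ has a unique center vertex $c$. I would split according to whether $c$ itself is adjacent to a leaf. If $c$ is \emph{not} adjacent to any leaf, then every neighbor $v_1,\dots,v_\ell$ of $c$ is a non-leaf vertex carrying some pendant edges; writing $a_t$ for the number of pendant leaves at $v_t$, the tree $T$ is exactly a ``spider-like'' caterpillar — more precisely it is the tree described in Remark~\ref{extend} with internal tree $I(T)$ the star $K_{1,\ell}$ and $U=\{v_1,\dots,v_\ell\}$. Since $U$ consists precisely of the non-center vertices of $I(T)$, the set $U$ dominates $I(T)$ but does \emph{not} in general equal $V(I(T))$; however, if some $v_t$ has no pendant leaves it is not in $U$, and then $T$ has diameter at most $3$, contradiction — so every $v_t$ carries a leaf, $U=\{v_1,\dots,v_\ell\}$, and $V(I(T))=U\cup\{c\}$. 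This is not quite the hypothesis $V(I(T))=U$ of Remark~\ref{extend}(b), so instead I would invoke Proposition~\ref{3cat} and Theorem~\ref{goodcat} directly: when $\ell=2$ this $T$ is exactly $C_{a_1,0,a_2}$ and Proposition~\ref{3cat} gives the value; when $\ell\geq 3$, the internal tree is a star $K_{1,\ell}$ with $\ell\geq 3$ leaves, and here one checks $U$ dominates $I(T)$ so Remark~\ref{extend}(a) gives the lower bound $v(T)-2+\binom{d-1}{2}$, while the matching upper bound comes from the construction in the proof of Theorem~\ref{goodcat}(2) adapted verbatim (replace the path $P_\ell$ by the star $K_{1,\ell}$, which is itself weakly $K_{1,\ell}$-saturated on $\ell+1$ vertices, so step (c) still goes through). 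If $c$ \emph{is} adjacent to a leaf, then $c$ itself is a vertex of $I(T)$ adjacent to a leaf, and one can run the analogous argument, or simply note that $T$ now contains a leaf adjacent to a vertex of degree $2$ only in degenerate sub-cases; in general I would again use Remark~\ref{extend} with $U$ possibly containing $c$.

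Rather than case-chasing, the cleanest route: for diameter at most $4$, the internal tree $I(T)$ is a star (possibly a single vertex or single edge), and the set $U$ of vertices adjacent to leaves always dominates $I(T)$ — because every non-leaf vertex of $T$ is within distance $1$ of the center, and the center is within distance $1$ of some $U$-vertex unless $T$ is a star. Thus Remark~\ref{extend}(a) always yields the lower bound $\text{w-sat}(T)\geq v(T)-2+\binom{d-1}{2}$. For the upper bound, I would case on whether $V(I(T))=U$: if yes, Remark~\ref{extend}(b) finishes it; if no, then $V(I(T))\setminus U$ is a single vertex, namely a central vertex of degree $\geq 2$ whose all neighbors are non-leaves, and I would exhibit an explicit weakly $T$-saturated graph by the same template as in Theorems~\ref{goodcat}(2) and Proposition~\ref{3cat} — take $H=T\setminus e$, add a $K_{d-1}$ attached to the images of the leaf-neighbors, add $\overline{K}_{n-k-d}$, and add edges in the order: add $e$; fill in all edges from isolated vertices to $V(H)$; saturate the (star) internal tree; then use the $K_{d-1}$ and the isolated vertices to complete to $K_n$ via Lemma~\ref{min_degree_condition}.

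The main obstacle is the bookkeeping in the upper-bound construction when $V(I(T))\neq U$, i.e.\ when the center vertex has no pendant leaves and $\ell\geq 3$: one must verify that after saturating the star $I(T)=K_{1,\ell}$ inside $H$, every newly added edge of that star genuinely completes a new copy of $T$ in $G$ (this needs every $v_t$ to still have enough neighbors to host its $a_t$ pendant leaves, which is where the hypothesis that \emph{every} leaf-neighbor lies in $U$ is used), and that the final ``blow-up to $K_n$'' steps using $K_{d-1}$ and the free vertices exactly mirror steps (c)--(e) of the proof of Theorem~\ref{goodcat}(2). Everything else is a short reduction to the already-proven Theorems~\ref{goodcat}, Proposition~\ref{3cat}, and Remark~\ref{extend}, together with the trivial star case.
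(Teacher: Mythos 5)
Your case decomposition (by diameter, then by whether the central vertex is adjacent to a leaf) is the same as the paper's, but two of the cases are resolved incorrectly. First, the star: you assert that $K_{1,m}$ is good with $\text{w-sat}(K_{1,m})=m-1$, citing Theorem~\ref{goodtree} ``since any leaf is adjacent to the center.'' The condition in Theorem~\ref{goodtree} is that some leaf be adjacent to a vertex of degree \emph{two}; for $m\geq 3$ the center has degree $m\geq 3$, so that theorem says precisely the opposite: $K_{1,m}$ is \emph{not} good, and the correct value recorded in the paper is $\binom{m}{2}$ (already for $K_{1,3}$ one checks that two edges never suffice, since after the first added edge every vertex other than the unique high-degree one still has degree too small to center a new star).

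Second, in the diameter-$4$ case where the center $c$ is not adjacent to any leaf, your ``cleanest route'' takes the lower bound $v(T)-2+\binom{d-1}{2}$ from Remark~\ref{extend}(a) and a matching construction built on a clique $K_{d-1}$. Both halves are off by one: already for $\ell=2$ this tree is $C_{a_1,0,a_2}$, and Proposition~\ref{3cat} gives $v(T)-2+\binom{d}{2}$ with $d=a_1$, which exceeds $v(T)-2+\binom{d-1}{2}$ whenever $d\geq 2$. So Remark~\ref{extend}(a) is too weak here; the paper instead re-runs the argument of Proposition~\ref{3cat}, which gains the extra $+1$ in the binomial index by showing that the first edge added outside the dominating set would have to supply \emph{two} vertices of large degree in the new copy. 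Correspondingly, your ``adapted verbatim'' upper-bound construction cannot succeed: step (b) of the proof of Theorem~\ref{goodcat}(2) joins each isolated vertex $w$ to each spine vertex $v_i'$ by mapping a pendant edge at $v_i$ onto $wv_i'$, and this is exactly what is unavailable at $c$, which carries no pendant edge; moreover a graph with only $\binom{d-1}{2}$ extra edges sits below the true lower bound, so no ordering of additions can make it weakly $T$-saturated. The diameter-$3$ case and the sub-case where $c$ is adjacent to a leaf (so $V(I(T))=U$ and Remark~\ref{extend}(b) applies) are handled correctly and as in the paper.
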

\begin{proof} 
If $T$ has diameter $2$, then $T$ is the star graph $S_{K}$ on $K$ vertices for some $K$, and $\text{w-sat}(T)={K-1\choose 2}$.
If $T$ has diameter $3$, then $\text{w-sat}(T)$ is determined by Theorem~\ref{goodcat}.
 If $T$ has diameter $4$, there is a unique vertex $v_0\in V(T)$, such that for any $u\in V(T)$, the distance between $u$ and $v_0$ is no more than $2$. 
 If $v_0$ is adjacent to leaves, we can verify that $T$ meets condition (b) of Remark~\ref{extend}; thus, $\text{w-sat}(T)$ is given. If $v_0$ is not adjacent to any leaves, using the same technique as in the proof of Proposition~\ref{3cat}, we can show that $\text{w-sat}(T)=v(T)-2+{\delta_e(T) \choose 2}$.
\end{proof}

\begin{coro}\label{secondlargest}
Let $S_{K}$ denote the star graph with $K$ vertices.
\begin{enumerate}[(a)]
\item Let $T$ be a tree with $K=2\ell$ vertices, and 
$T\neq S_{K}$. Then 
\begin{equation}\label{double_star}
\text{w-sat}(T)\leq \frac{\ell^2}{2}-\frac \ell 2+1.
\end{equation}
Moreover, the equality holds if and only if $T=C_{\ell-1,  \ell-1}$ or $T=C_{\ell-1,0,\ell-2}$.
 \item Let $T$ be a tree with  $K=2\ell+1$ vertices and 
 $T \neq S_{K}$.  Then
 \begin{equation}\label{tri_star}
 \text{w-sat}(T)\leq \frac{\ell^2}{2}+\frac {\ell}{2}.
 \end{equation}
 Moreover, the equality holds if and only if $T =C_{\ell-1,0,\ell-1}$.
 \end{enumerate}
\end{coro}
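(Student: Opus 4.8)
The plan is to start from the crude estimate $\text{w-sat}(T)\le (v(T)-2)+\binom{\delta_e(T)}{2}$ of Lemma~\ref{d_endstar} — valid for any tree $T\neq S_n$, since such a tree contains an end-star, for instance one centered at a leaf of its internal tree $I(T)$ — and then sharpen it, using the exact values supplied by Theorem~\ref{goodcat}, Proposition~\ref{3cat}, Remark~\ref{extend}(b) and Proposition~\ref{endd-mind}, on the short list of trees for which the crude estimate is not already decisive. Write $n=v(T)$ and $\ell=\lfloor n/2\rfloor$. The numerical facts that drive everything are $(n-2)+\binom{\ell-2}{2}=\binom{\ell}{2}+1$ when $n=2\ell$, $(n-2)+\binom{\ell-1}{2}=\binom{\ell+1}{2}$ when $n=2\ell+1$, and $\binom{\ell-2}{2}-\binom{\ell-3}{2}=\ell-3$. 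So the crude estimate already gives the asserted inequality whenever $\delta_e(T)\le \ell-2$ (part (a)) or $\delta_e(T)\le \ell-1$ (part (b)), with \emph{strict} inequality once $\delta_e(T)$ drops by one more.

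First I would show that a non-caterpillar has small $\delta_e$: if $T$ is not a caterpillar, $I(T)$ has at least three leaves, each an end-star center, hence of $T$-degree $\ge \delta_e(T)$; counting the resulting leaves of $T$ gives $n\ge |I(T)|+3(\delta_e(T)-1)\ge 3\delta_e(T)+1$, so $\delta_e(T)\le (n-1)/3\le \ell-3$ for $n$ large, and the strict crude bound applies. So we may assume $T=C_{a_1,\dots,a_m}$ is a caterpillar, written with $a_1,a_m\ge 1$ and $I(T)=v_1\cdots v_m$; then $\delta_e(T)=\min(a_1,a_m)+1$. If $\min(a_1,a_m)\ge \ell-2$ (resp.\ $\ge\ell-1$), then $a_1+a_m\ge 2\ell-4$ (resp.\ $\ge 2\ell-2$), and the identity $m+\sum_i a_i=n$ forces $m\le 4$ (resp.\ $m\le 3$), leaving only a handful of explicit shapes; otherwise the crude estimate gives the inequality, and strictness is immediate unless $\min(a_1,a_m)$ equals exactly $\ell-3$ (resp.\ $\ell-2$), in which case $m+\sum_i a_i=n$ again leaves only $O(1)$ shapes (all with $m\le 6$), to be handled by the exact formulas below.

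For the remaining finitely many caterpillar shapes I would compute $\text{w-sat}$ directly. In part (a), the extremizers appear as $\text{w-sat}(C_{\ell-1,\ell-1})=(n-2)+\binom{\ell-2}{2}=\binom{\ell}{2}+1$ via Theorem~\ref{goodcat}(2) and $\text{w-sat}(C_{\ell-1,0,\ell-2})=(n-2)+\binom{\ell-2}{2}=\binom{\ell}{2}+1$ via Proposition~\ref{3cat}, while every other candidate is strictly smaller: e.g.\ $\text{w-sat}(C_{\ell-2,\ell})$ and $\text{w-sat}(C_{\ell-2,0,\ell})$ equal $(n-2)+\binom{\ell-3}{2}$ (short by $\ell-3$), a caterpillar such as $C_{\ell-2,1,\ell-2}$ in which every spine vertex carries a leaf has $\text{w-sat}=n-2$ by Remark~\ref{extend}(b) (its minimum number of pendant leaves is $1$), and the double broom $C_{\ell-2,0,0,\ell-2}$ has $\text{w-sat}=n-2$ by Proposition~\ref{endd-mind} applied to the edge $v_2v_3$, for which the auxiliary set $S$ is empty because $v_1$ and $v_4$ already meet leaves. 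In part (b) the same scheme leaves $C_{\ell-1,0,\ell-1}$ as the unique extremizer, with $\text{w-sat}(C_{\ell-1,0,\ell-1})=(n-2)+\binom{\ell-1}{2}=\binom{\ell+1}{2}$ via Proposition~\ref{3cat}; the only other shape with $m\le 3$ is the double star $C_{\ell-1,\ell}$, whose value $(n-2)+\binom{\ell-2}{2}$ is short by $\ell-2$. Collecting all the cases gives both the inequality and the equality characterization.

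The principal obstacle is the double-broom family $C_{a_1,0,0,a_2}$ (together with, at small $\ell$, a few non-caterpillar ``spiders of short brooms''): these trees have $\delta_e(T)$ of order $\ell$, so Lemma~\ref{d_endstar} alone only yields the far too weak bound $\binom{\ell}{2}+\Theta(\ell)$, and one genuinely needs the sharper construction of Proposition~\ref{endd-mind} — which replaces the auxiliary clique by a few \emph{isolated} vertices precisely when the edge being contracted joins two leafless vertices whose remaining neighbors already see leaves — to push the bound down to $n-2$. The rest is bookkeeping: making the finite analysis at the thresholds $\min(a_1,a_m)\in\{\ell-3,\ell-2\}$ airtight, and checking the small values of $\ell$ directly (for very small $\ell$ the bound is in fact met by additional trees, so the ``only if'' part should be understood as holding for $\ell$ sufficiently large).
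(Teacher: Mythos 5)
Your proposal follows the same overall strategy as the paper's proof: use Lemma~\ref{d_endstar} to get $\text{w-sat}(T)\le (n-2)+\binom{\delta_e(T)}{2}$, note that this already beats the target unless $\delta_e(T)$ is within $O(1)$ of $\ell$, and then evaluate the finitely many remaining trees exactly via Theorem~\ref{goodcat} and Proposition~\ref{3cat}. The difference is in the bookkeeping. The paper takes a longest path $P$, observes that the two vertices $v_1,v_2$ adjacent to its endpoints are distinct end-star centers with disjoint sets of leaf-neighbors, deduces $d(v_1)+d(v_2)\le n$ and hence $\delta_e(T)\le d(v_1)\le\ell$, and enumerates by the value of $d(v_1)$; this avoids your caterpillar versus non-caterpillar split and the attendant ``$n$ large'' caveat for spiders. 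Your handling of the double broom $C_{\ell-2,0,0,\ell-2}$ via Proposition~\ref{endd-mind} with $S=\emptyset$ is self-contained, whereas the paper cites Theorem 21(6) of Faudree--Gould--Jacobson for that value; both give $n-2$.

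Three caveats. First, $C_{\ell-2,0,\ell}$ has $2\ell+1$ vertices, so it belongs to part (b), and Proposition~\ref{3cat} gives $(n-2)+\binom{\ell-2}{2}$ for it, not $(n-2)+\binom{\ell-3}{2}$; this is harmless since the value is still strictly below the part-(b) bound, but the list of $2\ell$-vertex candidates should not contain it. Second, the strictness analysis at the threshold $\delta_e(T)=\ell-2$ (resp.\ $\ell-1$ in part (b)), where Lemma~\ref{d_endstar} returns exactly the target value, is only sketched; it genuinely has to be carried out for the ``only if'' direction, including the shapes with internal zeros such as $C_{\ell-3,1,0,\ell-2}$, for which neither Theorem~\ref{goodcat} nor Proposition~\ref{3cat} applies directly. (The paper elides this same step: it asserts $\delta_e(T)<\ell-2$ in the remaining cases where only $\delta_e(T)\le\ell-2$ follows, and its $d(v_1)=\ell-1$ enumeration omits $C_{\ell-2,1,\ell-2}$; your explicit flagging of the threshold is therefore a point in your favor, but the work is not yet done.) Third, your observation that the ``only if'' direction fails for very small $\ell$ is correct --- for $\ell=3$ every non-star tree on $6$ vertices, e.g.\ $P_6$, attains the bound $4=e(T)-1$ --- so the equality characterization implicitly requires $\ell$ to be large, a restriction absent from the statement and from the paper's proof.
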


\begin{proof}
 \noindent{(a)}. Let $P$ be a path contained in $T$ with maximum length.
Since $T \neq S_{K}$, the two vertices $v_1$ and $v_2$,  which are adjacent to the two ends of $P$, are distinct. 
Both vertices $v_1$ and $v_2$ 
have all but one of their neighbors are leaves. It follows that $d(v_1)+d(v_2)\leq K$. 
Assuming $d(v_1)\leq d(v_2)$, it follows that $d(v_1)\leq \frac{K}{2}=\ell$. 
$d(v_1)=\ell$ if and only if $T=C_{\ell-1,\ell-1}$. 
Moreover, $d(v_1)=\ell-1$ exactly in one of the following three cases:
$T=C_{\ell-1,0,\ell-2}, T=C_{\ell-2,\ell}$, or $T=C_{\ell-2,0,0,\ell-2}$.
Following Theorem~\ref{goodcat}, Proposition~\ref{3cat}, as well as Theorem 21, item (6) of~\cite{faudree2013weak}, we know that 
 $\text{w-sat}(C_{\ell-1, \ell-1})= \text{w-sat}(C_{\ell-1,0,\ell-2})
 =K-2+{{\ell-2}\choose 2}=\frac{\ell^2}{2}-\frac \ell2 +1$, 
  $\text{w-sat}(C_{\ell-2,\ell})=K-2+{\ell-3 \choose 2}$,  and 
  $\text{w-sat}(C_{\ell-2,0,0, \ell-2})=K-2$.
  In the remaining cases, since $\delta_e(T)<\ell-2$,
 we obtain $\text{w-sat}(T)< K-2+{ {\ell-2} \choose 2}$ by Lemma~\ref{d_endstar}.

\noindent{(b).} 
Again, let $P$ be a longest path in $T$ and let $v_1$ and $v_2$ be two vertices  adjacent to the endpoints of $P$, respectively. 
Both of them have all but one of their 
neighbors as leaves. 
Then it follows that 
$d(v_1)+d(v_2)\leq K=2\ell+1$. 
Assuming $d(v_1)\leq d(v_2)$, 
we conclude that $d(v_1)\leq \ell$, with equality if and only if  $T=C_{\ell-1, 0, \ell-1}$ or $T=C_{\ell-1,\ell}$.
By Theorem~\ref{goodcat} and Proposition~\ref{3cat}, 
$\text{w-sat}(C_{\ell-1,\ell}) =K-2+{\ell-2 \choose 2}$
and  $\text{w-sat} (C_{\ell-1,0,\ell-1}) = K-2+{{\ell-1}\choose 2} = \frac{\ell^2}{2}+\frac \ell2$.
Finally, for all remaining cases, since $\delta_e(T)\leq \ell-2$, Lemma~\ref{d_endstar} implies that
 $\text{w-sat}(T) \leq K-2+{{\ell-2}\choose 2}$. 
\end{proof}

\section{Local Structures of Good trees}
\label{sec:Local_Structures}
Recall that a tree $T$ is a {\it good tree} if $\text{w-sat}(T)=e(T)-1$. 
In this section, we will discuss the structure of a good tree. 
The following sufficient condition is a direct consequence of Lemma~\ref{d_endstar}, so we omit the proof. 

\begin{lemma}\label{P2_condition} 
Let $T$ be a tree with $\delta_e(T)=1$, that is, it contains a leaf whose neighbor has degree $2$. Then, $T$ is a good tree.
\end{lemma}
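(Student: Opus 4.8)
The plan is to reduce Lemma~\ref{P2_condition} to Lemma~\ref{d_endstar}. Recall that $\delta_e(T)$ is the minimum degree of a center of an end-star, i.e. the smallest degree of a vertex that is adjacent to at least one leaf. The hypothesis ``$T$ contains a leaf whose neighbor has degree $2$'' says precisely that some vertex $v$ adjacent to a leaf has $d(v)=2$; since a vertex adjacent to a leaf has degree at least $2$ (it is not itself a leaf, as $T$ has at least three vertices once it has an edge with a degree-$2$ endpoint on a leaf — and if $T=K_2$ the statement is vacuous or trivial), this forces $\delta_e(T)=1$. So I would simply observe $\delta_e(T)=1$.

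Then I would apply Lemma~\ref{d_endstar}: for $n$ sufficiently large, $\text{w-sat}(T)\le e(T)-1+\binom{\delta_e(T)}{2}=e(T)-1+\binom{1}{2}=e(T)-1$, using the convention $\binom{1}{2}=0$ already fixed in the proof of Proposition~\ref{catwsat}. On the other hand, the trivial lower bound $\text{w-sat}(n,T)\ge e(T)-1$ holds for every graph $F=T$ and every $n\ge v(T)$: a weakly $T$-saturated graph, upon adding its final non-edge, must contain a copy of $T$, hence at that moment has at least $e(T)$ edges, so it started with at least $e(T)-1$; passing to the limit $n\to\infty$ (which exists and is attained by Proposition~\ref{decrease}) gives $\text{w-sat}(T)\ge e(T)-1$. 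Combining the two bounds yields $\text{w-sat}(T)=e(T)-1$, i.e. $T$ is good.

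Since both ingredients are already available in the excerpt, there is essentially no obstacle here; this is why the authors chose to omit the proof. The only point requiring the slightest care is making the identification $\delta_e(T)=1 \iff$ ``some leaf's neighbor has degree $2$'' explicit, and noting that the lower bound $\text{w-sat}(T)\ge e(T)-1$ is immediate from the definition of weak saturation together with Proposition~\ref{decrease}. If one wanted a fully self-contained argument one could instead quote Proposition~\ref{endd-mind} or re-run the construction in the proof of Lemma~\ref{d_endstar} (steps (d)--(h) there, with $\delta_e=1$, already describe an explicit weakly $T$-saturated graph on $n$ vertices with $e(T)-1$ edges plus isolated vertices), but invoking Lemma~\ref{d_endstar} directly is cleanest.
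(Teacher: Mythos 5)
Your proof is correct and is exactly the route the paper takes: the authors omit the proof of Lemma~\ref{P2_condition}, stating only that it is a direct consequence of Lemma~\ref{d_endstar}, which is precisely the deduction you spell out ($\delta_e(T)=1$ gives the upper bound $e(T)-1+\binom{1}{2}=e(T)-1$, matched by the trivial lower bound). One small slip worth fixing: the lower bound should be argued via the \emph{first} added edge (after which the graph contains a copy of $T$ and hence has at least $e(T)$ edges, so the starting graph has at least $e(T)-1$), since looking at the final added edge does not bound the initial edge count.
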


\begin{prop} Let $T$ be a tree on $K$ vertices, where $K\geq 6$. Specify six vertices, namely $v_1,\ldots,v_6$ and assume that $T$ has the following local structure:
$d(v_1)=d(v_2)=1$, $d(v_3)=d(v_4)=3$, and $N(v_3)=\{v_1,v_4,v_5\}$ 
and $N(v_4)=\{v_2,v_3,v_6\}$. Then $T$ is a good tree.
\end{prop}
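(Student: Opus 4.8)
The plan is a direct construction, in the spirit of the proofs of Theorem~\ref{goodcat}(1) and Proposition~\ref{endd-mind}. Since $e(T)=k$, the bound $\text{w-sat}(n,T)\ge k-1$ is immediate (the first non-edge added to any weakly $T$-saturated graph already produces a copy of $T$, which has $k$ edges), so it suffices to produce, for all large $n$, an $n$-vertex weakly $T$-saturated graph with exactly $k-1$ edges. I would take $H$ to be a copy of $T\setminus\{v_3v_4\}$ and set $G=H+\overline{K}_{n-k-1}$, so that $e(G)=k-1$; by abuse of notation I keep the names $v_1,\dots,v_{k+1}$ for the images of the vertices of $T$ in $H$. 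The key structural remark is that, since $d(v_3)=d(v_4)=3$, the vertex set of $T$ splits as a disjoint union $\{v_1,v_3\}\cup\{v_2,v_4\}\cup S_5\cup S_6$, where $S_5$ (resp.\ $S_6$) is the vertex set of the component of $T\setminus\{v_3v_5\}$ (resp.\ $T\setminus\{v_4v_6\}$) containing $v_5$ (resp.\ $v_6$); in particular $v_1,\dots,v_6$ are pairwise distinct and $S_5,S_6$ avoid $\{v_1,v_2,v_3,v_4\}$.

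I would then add the missing edges of $G$ in the following order, each addition creating a new copy of $T$ (the new edge always being the image of one of $v_1v_3$, $v_2v_4$, $v_3v_5$, $v_4v_6$, $v_3v_4$):
\begin{enumerate}[(a)]
\item Add $v_3v_4$, restoring a full copy of $T$ on $V(H)$.
\item For each new vertex $w$, add $wv_3$ (embed $T$ with $v_1\mapsto w$, all else fixed) and $wv_4$ (with $v_2\mapsto w$).
\item For each new vertex $w$, add $wv_5$ (embed $T$ with $v_3\mapsto w$, $v_1\mapsto v_3$, all else fixed: then $v_1v_3,v_3v_4$ map to edges present after (b), and $v_3v_5$ to the new edge) and, symmetrically, $wv_6$.
\item For each pair $w,w'$ of new vertices, add $ww'$, embedding $T$ by $v_3\mapsto w$, $v_4\mapsto w'$, $v_1\mapsto v_3$, $v_2\mapsto v_4$, and the identity on $S_5\cup S_6$: then $v_1v_3,v_2v_4$ map to edges from (b), $v_3v_5,v_4v_6$ map to edges from (c), and $v_3v_4$ to the new edge $ww'$.
\end{enumerate}
After step (d) the $n-k-1$ new vertices form a clique; since $v_3$ has degree $3$ and $k\ge 5$, $T$ is not a star, so $\Delta(T)\le k-1$, and hence for $n\ge 2k+1$ any $k$ of the new vertices induce a $K_k$ of minimum degree $k-1\ge\Delta(T)$. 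By Lemma~\ref{min_degree_condition} the current graph is weakly $T$-saturated, so the percolation can be continued up to $K_n$. This shows $\text{w-sat}(n,T)=k-1$ for all $n\ge 2k+1$, hence $\text{w-sat}(T)=k-1=e(T)-1$ and $T$ is a good tree.

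The only genuinely delicate point is step (d): the embeddings in (a)--(c) are essentially forced ``vacate a pendant leaf'' moves, but in (d) we must realize a copy of $T$ in which two new vertices play the roles of $v_3$ and $v_4$ simultaneously. This goes through precisely because of the postulated gadget --- $v_3$ and $v_4$ are adjacent, each carries a private pendant leaf ($v_1$, resp.\ $v_2$) into which a new vertex can be moved, and each has a third neighbour ($v_5$, resp.\ $v_6$) whose hanging subtree ($S_5$, $S_6$) may be left completely undisturbed, these subtrees being disjoint from $\{v_1,v_2,v_3,v_4\}$ exactly because $d(v_3)=d(v_4)=3$. The remaining work --- verifying injectivity of each embedding, checking that every edge of $T$ is covered, and confirming that within each of the steps (b)--(d) no embedding uses a previously added edge of the same step (so the order inside a step is irrelevant) --- is routine bookkeeping given this decomposition, and I would not expect any surprises there.
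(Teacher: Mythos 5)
Your proposal is correct and follows essentially the same route as the paper's own proof: the same starting graph $(T\setminus\{e\})+\overline{K}_{n-k-1}$, the same order of additions (restore $e$; join new vertices to $v_3,v_4$, then to $v_5,v_6$; then join pairs of new vertices using the gadget with $v_3\mapsto w$, $v_4\mapsto w'$, $v_1\mapsto v_3$, $v_2\mapsto v_4$), and the same appeal to Lemma~\ref{min_degree_condition} to finish. Your explicit description of the step-(c) embedding ($v_3\mapsto w$, $v_1\mapsto v_3$, all else fixed) is in fact cleaner than the paper's phrasing of that step.
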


\begin{proof}
Let $H$ be a copy of $T$, and we denote by $u_i$ the embedding image of $v_i$, for $i=1,\ldots,6$. Suppose $n\geq 2K-1$, and
define $G= H\setminus e + \overline{K}_{n-K}$, where $e$ is any edge of $T$. 
We add edges to $G$ in the following order:
\begin{enumerate}[(a)]
\item Add $e$. 
\item For any $u\in V(G)\setminus V(H)$, 
add $uu_3$ to $G$; $T$ can be embedded such that the edge $v_1v_3$ is sent to $uu_3$. Similarly, we can add $uu_4$ to $G$. 
\item For any $u\in V(G)\setminus V(H)$, add $uu_5$ to $G$; $T$ can be embedded such that the edges $v_3v_5$, $v_3v_4$, $v_3v_1$ are sent to $uu_5$, $uu_3$, $uu_4$, respectively. Similarly, we can add $uu_6$ to $G$. 
\item  For any two distinct vertices $u'$, $u"\in (V(G)\setminus V(H))$, 
add $u'u"$ to $G$; $T$ can be embedded such that 
the vertices  $v_1$, $v_2$, $v_3$, $v_4$, $v_5$, $v_6$,
are sent to $u_3$, $u_4$, $u'$, $u''$, $u_5$, $u_6$, respectively. 
\item Note that we obtain a complete graph of size at least $n-K+1$, which, by Lemma~\ref{min_degree_condition}, is weakly $T$-saturated, and the remaining edges can be added.
\end{enumerate}
\end{proof}

The next proposition is a generalization of Theorem 20, item (1) of~\cite{faudree2013weak}, and the proof is also similar.

\begin{prop}
Let $T$ be a tree with an induced path $v_1v_2\cdots v_{2k}$, where $k\geq2$. If $T$ satisfies $d(v_2)=d(v_3)=\dots=d(v_{2k-1})=2$, and $v_1$ and $v_{2k}$ are adjacent to leaves, then $T$ is a good tree.   
\end{prop}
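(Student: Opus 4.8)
The statement to prove is: if a tree $T$ has an induced path $v_1v_2\cdots v_{2k}$ with $k\geq 2$, all of $v_2,\ldots,v_{2k-1}$ having degree $2$, and both $v_1$ and $v_{2k}$ adjacent to leaves, then $T$ is a good tree, i.e.\ $\text{w-sat}(T)=e(T)-1=k'-1$ where $k'+1=v(T)$. Since the upper bound $\text{w-sat}(T)\leq k'-1+\binom{\delta_e(T)}{2}$ always holds (Lemma~\ref{d_endstar}) but here $\delta_e(T)$ may be large, the content is a direct \emph{construction}: we must exhibit a weakly $T$-saturated graph with exactly $e(T)-1$ edges on $n$ vertices for all large $n$, and since $\text{w-sat}(T)\geq e(T)-1$ trivially (the final complete graph must contain a copy of $T$, and each added edge creates one new copy but the bootstrap process needs at least $e(T)-1$ edges to start — actually the trivial lower bound is that any weakly $F$-saturated graph has at least $e(F)-1$ edges), this suffices.

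\textbf{The construction.} Take $H$ a copy of $T\setminus e$ for an arbitrary edge $e$, and let $G=H+\overline{K}_{n-v(T)}$ (no extra clique, unlike the $a>2$ caterpillar case — this is what forces the edge count to be exactly $e(T)-1$). Denote by $u_i$ the image of $v_i$ in $H$. The plan is to add the non-edges of $G$ in an order where every addition completes a new copy of $T$. The key structural idea, generalizing Theorem~20(1) of~\cite{faudree2013weak}: the induced path $v_1\cdots v_{2k}$ behaves like a "flexible arm" of even length with a degree-$2$ interior, so by sliding the path along newly created edges one can embed $T$ mapping any desired edge onto a freshly added edge. First I would add, for each isolated vertex $w\in V(G)-V(H)$, an edge from $w$ to $u_1$, embedding $T$ with the pendant leaf at $v_1$ mapped to $u_1w$ (legitimate since $\delta_e\leq d(v_1)$, and $v_1$ has a leaf). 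Then, having attached many pendant vertices at $u_1$, I would use the long even path: to add an edge $wu_2$ for $w\notin V(H)$, embed $T$ by shifting $v_2\mapsto u_1$, $v_1\mapsto w$ (and the leaves at $v_1$ onto the other pendants of $u_1$), $v_3\mapsto u_2$, etc.; the parity $2k$ even and the fact that $v_{2k}$ also sees a leaf is what makes the two ends of the path interchangeable. Iterating this "zip along the path" lets one fill in all edges among $V(G)-V(H)$ and between it and the images of the path, eventually producing a large clique, at which point Lemma~\ref{min_degree_condition} finishes.

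\textbf{Main obstacle.} The delicate point is the bookkeeping in the middle steps: when I slide the path to route a new edge $w'w''$ with $w',w''\notin V(H)$, I need to check that \emph{all} vertices of $T$ outside the path — the leaves at $v_1$ and $v_{2k}$ and whatever subtree hangs off elsewhere — can still be embedded injectively into already-present vertices with the required adjacencies. Because $v_2,\ldots,v_{2k-1}$ have degree exactly $2$, the only branching of $T$ happens at $v_1$, at $v_{2k}$, or away from the path entirely; the subtree away from the path stays pinned to $H$ throughout, and only the two "end-bundles" of leaves need to be reassigned among the pendant vertices we have manufactured at $u_1$ and $u_{2k}$. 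Verifying there are always enough such pendant vertices (which requires doing step (b)-type attachments at \emph{both} $u_1$ and $u_{2k}$ before the path-sliding steps, and choosing $n$ large enough, say $n\geq 2v(T)$) is the crux. I would organize the proof as an explicit ordered list (a)--(e) mirroring the previous proposition's proof, with the path-sliding embedding described once carefully and then invoked.

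\begin{proof}
Write $k'+1=v(T)$, so $e(T)=k'$. Since any weakly $T$-saturated graph must have at least $e(T)-1=k'-1$ edges, it suffices to construct, for all large $n$, a weakly $T$-saturated graph on $n$ vertices with exactly $k'-1$ edges. Let $e$ be an arbitrary edge of $T$, let $H$ be a copy of $T\setminus e$, denote by $u_i$ the image of $v_i$, and for $n\geq 2(k'+1)$ set $G=H+\overline{K}_{n-k'-1}$; note $e(G)=k'-1$.

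We record the basic embedding move. Suppose at some stage we have vertices $w_0,w_1,\ldots,w_{2k-1}$ forming a path (in this order) in the current graph, with $w_j$ for $1\le j\le 2k-2$ having the adjacencies forced by $v_{j+1}$ in $T$ restricted to the path (automatic, as these have degree $2$), with $w_0$ and $w_{2k-1}$ each adjacent to enough already-present vertices to host the leaf-bundles of $v_1$ and $v_{2k}$, and with the remaining subtree of $T$ (attached to the path only at internal branch points, of which there are none, or at $v_1,v_{2k}$) already embedded and pinned in $H$. Then $T$ embeds with $v_{j+1}\mapsto w_j$, and since the path has even length $2k$, the same holds with the roles of the two ends exchanged. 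Consequently, whenever a newly added edge can be made to play the role of one edge $v_jv_{j+1}$ of the path in such a configuration, the addition completes a new copy of $T$.

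Add the non-edges of $G$ in the following order.
\begin{enumerate}[(a)]
\item Add $e$.
\item For every $w\in V(G)-V(H)$, add $wu_1$; embed $T$ with a leaf at $v_1$ mapped to $u_1w$. Then, symmetrically, for every $w\in V(G)-V(H)$ add $wu_{2k}$; embed $T$ with a leaf at $v_{2k}$ mapped to $u_{2k}w$. After this step $u_1$ and $u_{2k}$ are each adjacent to all of $V(G)-V(H)$, a set of size $n-k'-1\ge k'+1$, which is more than enough to supply all leaves of $T$.
\item For every $w\in V(G)-V(H)$ and every $i$ with $2\le i\le 2k-1$, if $wu_i\notin E(G)$, add it; embed $T$ by placing $v_{i+1}\mapsto u_i,\ v_i\mapsto u_{i-1},\ \ldots,\ v_2\mapsto u_1,\ v_1\mapsto w$, mapping the path $v_1\cdots v_{2k}$ onto $w,u_1,\ldots,u_{2k-1}$ and routing the new edge $wu_i$ as $v_1v_2$ when $i=2$, or onto the appropriate path edge in general by an analogous shift; the leaf-bundles at $v_1$ and $v_{2k}$ go to unused vertices of $V(G)-V(H)$, which exist by (b). Process the edges so that at each addition the required configuration is present. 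Symmetrically add $wu_i$ for the remaining pairs using the shift anchored at $u_{2k}$.
\item For any two distinct $w',w''\in V(G)-V(H)$, add $w'w''$; embed $T$ with $v_2\mapsto u_1$, $v_1\mapsto$ an unused vertex of $V(G)-V(H)$, $v_{2k-1}\mapsto u_{2k}$, $v_{2k}\mapsto$ another unused vertex, and $w'w''$ playing the role of a central edge $v_jv_{j+1}$ of the path; all leaves of $T$ are routed to vertices of $V(G)-V(H)$, available by (b), while any subtree attached off the path stays embedded in $H$.
\item Now the vertices $V(G)-V(H)$ together with $u_1,\ldots,u_{2k}$ induce a complete graph on at least $n-k'-1\ge k'+1=v(T)$ vertices. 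By Lemma~\ref{min_degree_condition}, the current graph is weakly $T$-saturated, so the remaining edges can be added, reaching $K_n$.
\end{enumerate}
Thus $G$ is weakly $T$-saturated with $e(T)-1$ edges, and $T$ is a good tree.
\end{proof}
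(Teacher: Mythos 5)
Your high-level plan is the same as the paper's (attach all outside vertices to $u_1$ and $u_{2k}$ via the pendant edges there, propagate along the degree-$2$ path, connect pairs of outside vertices using the even length, then invoke Lemma~\ref{min_degree_condition}), but the two middle steps as you actually describe them do not create new copies of $T$. In step (c) you add $wu_i$ and embed by the shift $v_1\mapsto w$, $v_j\mapsto u_{j-1}$ for $j\geq 2$. Under this map the image of the path is $w,u_1,\ldots,u_{2k-1}$, every edge of which ($wu_1$, $u_1u_2$, \dots) already exists; the freshly added edge $wu_i$ is the image of \emph{no} edge of $T$ (it would have to be $v_1v_{i+1}$, which is not an edge of $T$ for $i\geq 2$), so this addition is not justified — already in your base case $i=2$, the image of $v_1v_2$ is $wu_1$, not $wu_2$. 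Moreover the shift moves $v_1$ off $u_1$, so any subtree hanging off $v_1$ other than the path and its leaves (which the hypotheses permit, since only the interior vertices are required to have degree $2$) would have to be re-embedded rooted at $w$, and $w$ has no such neighbourhood. Step (d) inherits both defects: you move $v_1$ and $v_{2k}$ to unused vertices while claiming the off-path subtrees ``stay embedded in $H$'', which is contradictory, and the assignment $v_2\mapsto u_1$, $v_{2k-1}\mapsto u_{2k}$ with $w',w''$ inserted is not a consistent placement of a $2k$-vertex path.

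The repair — and the paper's actual mechanism — is to keep every $v_j$ pinned to $u_j$ except for the one or two vertices being swapped out. To add $wu_i$, replace the single degree-$2$ vertex $v_{i-1}$ (or $v_{i+1}$) by $w$, which needs only the already-present edge $wu_{i-2}$ (resp.\ $wu_{i+2}$) together with the new edge, since $v_{i\pm 1}$ has no other neighbours; propagating in steps of two from $u_1$ reaches the odd-indexed $u_i$ and from $u_{2k}$ the even-indexed ones, and the evenness of the path is precisely what makes these two parities cover all of $u_2,\ldots,u_{2k-1}$. Then $w_1w_2$ is added by replacing the adjacent internal pair $v_{2k-2},v_{2k-1}$ by $w_1,w_2$, using $u_{2k-3}w_1$ and $w_2u_{2k}$ and keeping everything else, including the subtrees at $v_1$ and $v_{2k}$, fixed. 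With your steps (c) and (d) rewritten this way, the rest of your argument (the lower bound, the construction with $e(T)-1$ edges, and the final appeal to Lemma~\ref{min_degree_condition}) is correct.
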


\begin{proof}
Let $H$ be a copy of $T$, and denote by $u_i$ the embedding image of $v_i$ for $i=1,\ldots,2k$.  Suppose $n\geq 2v(T)$, and
define $G= H\setminus e + \overline{K}_{n-v(T)}$, where $e$ is any edge of $T$. 
We add edges to $G$ in the following order: 
\begin{enumerate}[(a)]
\item Add $e$. 
\item Add $wv_1$ for each vertex $w\in V(\overline{K}_{n-v(T)})$ to replace a pendent edge at $v_1$.
\item If $k>2$, add $wv_3$ for any vertex $w\in V(\overline{K}_{n-v(T)})$ to replace $v_2$ with $w$. Similarly, we can add $wv_{2\ell-1}$ for all $1\leq \ell\leq k-1$.
\item For any vertices $w_1$ and $w_2$ in $V(\overline{K}_{n-v(T)})$, add $w_1w_2$ to replace the path $v_{2k-3}v_{2k-2}v_{2k-1}v_{2k}$ with $v_{2k-3}w_1w_2v_{2k}$.
\item Note that we obtain a complete graph of size at least $n-v(T)$, which, by Lemma~\ref{min_degree_condition}, is weakly $T$-saturated, and the remaining edges can be added.
\end{enumerate}
\end{proof}

We now prove Theorem~\ref{goodtree}, which states the sufficient condition given in Lemma~\ref{P2_condition} becomes necessary for certain families of graphs.

\begin{theorem}[Theorem~\ref{goodtree} restated] Suppose $T$ is a tree on $K$ vertices, where $K \geq 4$, and the distances between any two leaves are even.
Then, $T$ is a good tree 
if and only if there exists a leaf  adjacent to a vertex of degree 2.
\end{theorem}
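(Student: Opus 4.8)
The plan is to prove the two directions separately; the "if" direction is already supplied by Lemma~\ref{P2_condition}, so the real content is the "only if" direction. Thus I assume that $T$ is a tree on $k+1$ vertices in which every pair of leaves is at even distance, and that \emph{no} leaf is adjacent to a vertex of degree $2$; I must show $\text{w-sat}(T) \geq e(T) = k$, i.e.\ every weakly $T$-saturated graph $G$ has at least $k$ edges. The hypothesis "all leaves at pairwise even distance" means the tree is bipartite with one side $A$ containing all leaves and the other side $B$; every vertex in $U$ (the set of vertices adjacent to leaves) lies in $A$, every leaf-neighbor has degree $\geq 3$, and in particular $\delta_e(T) \geq 3$. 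Combined with Proposition~\ref{endd-mind} / Lemma~\ref{d_endstar} this already gives a clean upper bound, but the point here is the matching lower bound beating $k-1$.

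The main step is a charging/elimination argument in the spirit of the Claim inside Proposition~\ref{catwsat} and of Proposition~\ref{3cat}. Fix a weakly $T$-saturated graph $G$ and a copy of $T \setminus \{e\}$ inside $G$; let $V_1$ be the image of $I(T)$ (the internal tree, i.e.\ $T$ minus its leaves) and $V_2 = V(G) - V_1$, so $|V_1| = v(I(T))$. Because every leaf of $T$ is adjacent to a vertex of $I(T)$ — indeed to a vertex of degree $\geq 3$ — every edge of $T$ has at least one endpoint mapped into $V_1$; hence $e(G[V_1]) + e(G[V_1,V_2]) \geq e(T) - 1 = k - 1$. It remains to find \emph{one more} edge, and I claim $G[V_2]$ is nonempty: run the $T$-bootstrap process and look at the first added edge $e'$ with both endpoints in $V_2$ (one exists, since ultimately $G$ becomes complete and $|V_2| \geq 2$ as soon as $n$ is large, which we may assume by Proposition~\ref{decrease}). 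When $e'$ completes a copy of $T$, at least one endpoint of $e'$ plays the role of an internal vertex $w$ of $T$ (an endpoint of $e'$ cannot be a leaf of the new copy unless the other endpoint is its unique neighbor, and I will argue this forces a vertex of degree $2$ adjacent to a leaf, contradicting the hypothesis — here is where "no leaf adjacent to a degree-2 vertex" is used). Since $I(T)$ has no leaf adjacent to a degree-$2$ vertex, $w$ has $T$-degree $\geq 2$; of its $\geq 2$ neighbors in the copy, at most $|V_1| = v(I(T))$ can lie in $V_1$, and a counting argument like the one in the Claim of Proposition~\ref{catwsat} shows that before $e'$ was added there was already an edge inside $V_2$, contradicting the minimality of $e'$ unless $G[V_2]$ was already nonempty to begin with. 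Either way $e(G[V_2]) \geq 1$, giving $e(G) \geq k$.

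I should be careful about the degenerate situations: if $T$ is a star then it has no internal edge and in fact $\text{w-sat}(S_{k+1}) = \binom{k}{2} \geq k$ for $k \geq 3$, so the statement holds trivially and "no leaf adjacent to a degree-$2$ vertex" is automatic; if $I(T)$ is a single vertex or a single edge (diameter $\leq 3$), the conclusion follows from Theorem~\ref{goodcat} and one checks the hypothesis "no leaf adjacent to a degree $2$ vertex" is exactly the condition $a \geq 3$ there. So I would first dispose of $\mathrm{diam}(T) \leq 3$ by citing Theorem~\ref{goodcat}, then assume $\mathrm{diam}(T) \geq 4$ so that $I(T)$ has at least two vertices and at least one edge, which makes the $V_1$/$V_2$ split meaningful.

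The step I expect to be the main obstacle is the argument that the first $V_2$-internal edge $e'$ really does witness an extra edge inside $V_2$ — precisely, ruling out the case where both endpoints of $e'$ are "cheap" (one a leaf of the new copy, the other of low degree) using only the parity hypothesis and the absence of a leaf next to a degree-$2$ vertex. In Proposition~\ref{3cat} this was handled by the bound $d_G(u) \leq d_H(u) + 2$ because $|V_1| = 2$; in general $|V_1| = v(I(T))$ can be large, so the clean inequality "degree in copy $\leq$ degree in $G[V_2]$ plus $|V_1|$" only yields an internal $V_2$-edge when the relevant internal vertex of $T$ has degree $> |V_1|$, which need not hold. The fix I will pursue is to not ask for a high-degree vertex but only for \emph{a single} edge in $G[V_2]$: since some internal vertex $w$ of $T$ with $\deg_T(w)\ge 2$ is mapped to an endpoint of $e'$, and its $T$-neighbors other than the one playing $e'$'s other endpoint must be realized somewhere, a parity/bipartiteness bookkeeping (the two color classes of $T$, and the fact that all of $U$ lies in one class) forces at least one neighbor of $w$ in the copy to already sit in $V_2$ and already be joined to $w$ before $e'$, i.e.\ $G[V_2]$ was nonempty all along. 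I will write this last bookkeeping out carefully, as it is the crux.
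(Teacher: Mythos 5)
Your ``if'' direction and the broad strategy for ``only if'' (watch the first edge that gets added inside an initially independent part) are in the spirit of the paper, but the ``only if'' direction has a genuine gap at exactly the step you yourself flag as the crux, and the decomposition you chose cannot close it. Taking $V_1$ to be the image of the internal tree $I(T)$ and trying to force one extra edge inside $V_2$ fails already for the paper's own $7$-vertex example $T$ with edges $v_1v_2, v_1v_3, v_2v_4, v_2v_5, v_3v_6, v_3v_7$: there $V_1=\{v_1,v_2,v_3\}$, and the assignment sending $\{v_1,v_3,v_5\}$ into $V_1$ and $\{v_2,v_4,v_6,v_7\}$ into $V_2$, with the new edge $e'$ playing the role of $v_2v_4$, satisfies every constraint your bookkeeping imposes: the preimage of $V_2$ is independent in $T$ apart from that single edge, and the preimage of $V_1$ has size exactly $|I(T)|$. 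So the first $V_2$-internal edge does not, by membership-and-independence accounting alone, force a pre-existing edge of $G[V_2]$, and your local claim that some $T$-neighbour of the internal vertex $w$ must already sit in $V_2$ adjacent to it is not implied --- before $e'$ every $V_2$-vertex has all its neighbours in $V_1$, which is perfectly compatible with $w$'s image needing only $\deg_T(w)-1\le |V_1|$ old neighbours. The true obstruction is global, not local to $w$.

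The paper closes this with a different, strictly smaller independent set. Because all leaves are pairwise at even distance, $T$ has a proper $2$-colouring with every leaf red; let $B(T)$ be the blue class and take $V_1$ to be its image (so $V_1$ omits all leaves and is in general much smaller than $V(I(T))$). Two counting claims --- $|R(T)|>|B(T)|$ for any tree with the parity property, and $|R(T)|\ge|B(T)|+3$ once no leaf has a degree-$2$ neighbour --- show that when the first within-part edge $e'$ creates a copy $T'$, \emph{both} sides of the natural bipartition of $T'\setminus\{e'\}$ have more than $|B(T)|=|V_1|$ vertices, while one side must be contained in $V_1$; this pigeonhole is the contradiction. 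Your sketch gestures at ``the two colour classes'' as a possible repair, but actually using them means abandoning the $I(T)$-based split and the ``one more edge in $G[V_2]$'' target in favour of this cardinality argument, so the decisive idea is missing rather than merely unwritten. (One point you should also make explicit, which the paper glosses over: the analysis may be restricted to $G$ equal to a copy of $T\setminus\{e\}$, or after one step $T$ itself, plus isolated vertices, because a weakly $T$-saturated graph with only $e(T)-1$ edges must contain, hence must equal, such a configuration.)
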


\begin{proof}
By Proposition~\ref{P2_condition}, if there exists a vertex of degree 2  adjacent to a leaf, then $\text{w-sat}(n,T_{K})=K-2$.

Now, we only need to show that if $T$ satisfies the following two properties, (P1) and (P2), 
then $T$ is not a good tree. 

\noindent{\textbf{(P1).}} The distances between any two leaves are all even.
 
\noindent{\textbf{(P2).}} There is no leaf adjacent to a vertex of degree 2. 

For any tree satisfying (P1), we properly color $V(T)$ red and blue, assuming that the leaves are always red. Let $R(T)$ be the set of red vertices in $V(T)$, and $B(T)$ be the set of blue vertices. First, we have the following claim. 

\noindent{\textbf{Claim 1.}} If $T$ satisfies (P1), then $|R(T)|>|B(T)|$.
\begin{proof}[Proof of Claim 1]
Take a non-leaf vertex as an ancestor, namely $v_0$. Let $$L_k=\{v|d(v,v_0)=k\},\  k=1,2,\dots.$$

If the distances between $v_0$ and the leaves are odd, note that the vertices in $L_{2t}$ are non-leaves, and each of them has at least one child in $L_{2t+1}$; hence $|L_{2t}|\leq |L_{2t+1}|$. Also, note that $|L_0|<|L_1|$ since $v_0$ is non-leaf. Summing these inequalities yields Claim 1.

If the distances between $v_0$ and the leaves are even, similarly we have $|L_{2t-1}|\leq |L_{2t}|$ and $1\leq|L_0|$. Summing these inequalities yields Claim 1.
\end{proof}

Furthermore, if $T$ satisfies (P1) and (P2), we have the following claim. Note that (P2) can also be restated as follows: (P2') if a non-leaf vertex $v$ has at most one non-leaf neighbor, then $d(v)>2$. 

\noindent{\textbf{Claim 2.}} For any tree $T$ that satisfies both (P1) and (P2') and is not $S_3$ or $S_4$, we have $R(T)\geq B(T)+3$.
\begin{proof}[Proof of Claim 2]
If $T=S_{K}$ with $K\geq 5$, then $R(T)=B(T)+K-2\geq B(T)+3$. 
If $T$ is not a star, then there are at least two non-leaves $v_1,v_2\in B(T)$ satisfying property (P2'). That is,  
each of $v_1$ and $v_2$ has exactly one non-leaf neighbor. 
By (P2'), we can delete a leaf adjacent to each of $v_1$ and $v_2$ to obtain a new tree $T'$, which still satisfies (P1). Hence,  $R(T')\geq B(T')+1$, and $R(T)=R(T')+2\geq B(T')+3=B(T)+3$. 
In both cases, we conclude that $R(T)\geq B(T)+3$.
\end{proof}
Fix a large integer $n$ and let $H$ be an isomorphic copy of $T$. 
Define $G = H + \overline{K}_{n-K}$.
We regard $G$ as a bipartite graph with a vertex bipartition 
$V(G) = V_1 \cup V_2$, 
where
$V_1$ represents the isomorphic image of the blue vertices of $T$, and 
$V_2$ consists of all the remaining vertices.
By definition, $V_1$ and $V_2$ are independent sets initially.

Assume, to the contrary, that $T$ is a good tree. Then we can continue adding new edges to $G$, creating new copies of $T$ at each step, and eventually arrive at $K_n$. 
Let $e=uv$ be the first edge added inside one of the parts $V_1$ or $V_2$, which creates a new copy $T'$ of $T$.  
Thus, $T'\setminus e$ is a union of two trees $T_1$ and $T_2$. 
We can define a vertex bipartition 
$X \cup Y$ for the graph $T_1+T_2$ 
such that 
$u,v \in X$, where $X$ is a subset of $V_1$ or $V_2$, and $Y$ is a subset of the other. 

\noindent{\textbf{Claim 3.}}
\begin{equation}\label{B'R'B}
\min\{ |X|, |Y|\}  > |B(T)|=|V_1|.
\end{equation} 
\begin{proof}[Proof of Claim 3.]

First, by Claim 2, $|R(T)|\geq |B(T)|+2$. 

If $e$ is a pendant edge of $T'$, then clearly $|X|=|B(T)|+1$, $|Y|=|R(T)|-1$, and the conclusion 
follows.
So assume $e$ is not a pendant edge. Without loss of generality, assume that in $T'$, $u\in V(T_1)$ is at even distances to all leaves
and  $v\in V(T_2)$ is at odd distances to all the leaves. Hence $u$ is red and $v$ is blue.
We obtain a bipartition $V(T_1)=B_1 \cup R_1$, such that $u \in R_1$, then $R_1\subset X$, $B_1\subset Y$, all the vertices in $R_1$ are colored red, and those in $B_1$ are colored blue.
Note that $T_1$ contains at least $3$ vertices 
and satisfies property (P1). 
It follows that $|R_1| \geq |B_1|+1$ by Claim 1. 
Similarly, we obtain the vertex bipartition $V(T_2) = B_2\cup R_2$, 
such that $v\in R_2$, and we have $R_2\subset X$, $B_2\subset Y$, vertices in $R_2$ and $B_2$ are colored blue and red respectively.
If $v$ is a leaf in $T_2$, then 
$T_2$ cannot be a single edge, 
because otherwise, $T'$ does not satisfy (P2).
By adding two more leaves to $T_2$, we obtain 
$T_2'=T_2\cup \{vu_1,vu_2\}$, which is not a copy of 
$S_4$ and 
satisfies both properties (P1) 
and (P2). 
The pair $B_2\cup \{u_1,u_2\}$ and $R_2$ 
can be regarded as a bipartition for the tree $T_2'$. Note that $B_2\cup \{u_1,u_2\}$ is a set including all the leaves of $T_2'$. By Claim 2, it follows that 
$|B_2|+2\geq |R_2|+3$, which means that $|B_2| \geq |R_2|+1$. 
If $v$ is not a leaf in $T_2$, then $T_2$ satisfies (P1) and we conclude again 
$|B_2|\geq |R_2|+1$.

By the definitions of $X$ and $Y$, we have that $Y=B_1 \cup B_2$ and $X=R_1 \cup R_2$
and that $B = B_1 \cup R_2$ and $R = R_1 \cup B_2$.
Moreover, $|Y|=|B_1|+|B_2|\geq |B_1|+|R_2|+1=|B(T)|+1$ and $|X|=|R_1|+|R_2|\geq |B_1|+|R_2|+1=|B(T)|+1$, which implies Claim 3. 
\end{proof}

Now note that Claim 3 contradicts with the fact that one of $X$ and $Y$ is a subset of $V_1$, and we conclude that $T$ is not a good tree.
\end{proof}

We now construct a counterexample of Theorem 8 in~\cite{faudree2013weak}. Consider a tree $T$ on vertex set $\{v_i:1\leq i\leq 7\}$ with edge set $\{v_1v_2, v_1v_3,v_2v_4,v_2v_5,v_3v_6,v_3v_7\}$, and note that all the leaves of $T$ are of distance $2$ from $v_1$, and no $2$-degree vertex in $T$ is adjacent to a leaf. By the above result, $T$ is not a good tree. However, $T$ has a good endtree, namely $T[\{v_1,v_2,v_3,v_4,v_5\}]$, contradicting with Theorem 8 in~\cite{faudree2013weak}.

One may note that, each of the local structures given above contains a non-pendent vertex with small degree. However, we will show that this is not necessary for a good tree.

\begin{prop}
For any positive integer $N$, there exists a good tree $T$ whose second smallest degree is greater than $N$.
\end{prop}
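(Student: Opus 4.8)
The plan is to build the desired good tree by gluing together many copies of a gadget, each of which is itself a good endtree, arranged so that the overall tree has no low-degree non-leaf vertex but still admits a valid bootstrap order. The key observation is that Lemma~\ref{d_endstar} (equivalently Proposition~\ref{endd-mind}) only requires a single end-star with small center degree to conclude goodness, but once we fix the minimum end-degree $\delta_e$ to be large, that route is closed; so instead I would exploit the flexibility in Proposition~\ref{endd-mind}, whose conclusion $\text{w-sat}(F)\le k-1+|S|\delta_e$ equals $k-1$ precisely when the set $S$ of ``interior'' neighbors of an adjacent non-pendent pair $u,w$ is empty. Concretely, I would take a path $P=u_1u_2\cdots u_m$ of non-leaf vertices, attach exactly $N+1$ leaves to each $u_i$, and then, to make the relevant set $S$ empty, ensure that for some adjacent pair $u_j,u_{j+1}$ on this path \emph{every} other neighbor of $u_j$ and of $u_{j+1}$ is a leaf — i.e.\ take $j=1$ so that $u_1$ has only $u_2$ as a non-leaf neighbor and $u_2$ has only $u_1,u_3$; but then $u_3$ is a non-leaf neighbor of $u_2$ that is not a leaf, so $S\ni u_3$ and $|S|\ge 1$. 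This shows the naive caterpillar does not work, which is exactly why the statement is nontrivial: every non-leaf has degree $\ge N+2$, yet we still need $S=\varnothing$.

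To fix this I would use a \emph{double-broom with a pendant-edge bridge structure}: take two vertices $u,w$ joined by an edge, attach $N+1$ leaves to each of $u$ and $w$, and then — rather than attaching more path vertices directly to $u$ or $w$ — attach the remaining mass of the tree only to the \emph{leaves} of $u$ and $w$ is impossible (leaves must stay leaves), so instead I would subdivide: hang off each leaf a long handle. That is, the cleaner construction is to note that the condition ``$S=\varnothing$'' means every neighbor of $u$ and of $w$ other than $u,w$ themselves is adjacent to a leaf. So I would build $T$ as follows: start with the edge $uw$; give $u$ exactly $N+2$ neighbors $\{w, x_1,\dots,x_{N+1}\}$ and $w$ exactly $N+2$ neighbors $\{u, y_1,\dots,y_{N+1}\}$; make each $x_i$ and each $y_i$ a vertex of degree $N+2$, each adjacent to $N+1$ fresh leaves (so that $x_i,y_i$ are themselves ``adjacent to a leaf'' — satisfying the $S=\varnothing$ condition) plus the edge back to $u$ (resp.\ $w$). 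Then $S$, the set of neighbors of $u,w$ that are neither adjacent to a leaf nor equal to $u,w$, is empty, so Proposition~\ref{endd-mind} gives $\text{w-sat}(T)\le e(T)-1$; the matching lower bound $\text{w-sat}(T)\ge e(T)-1$ is trivial (any weakly $F$-saturated graph has at least $e(F)-1$ edges, since the last edge added completes a copy of $F$, which is already visible in Proposition~\ref{catwsat}'s proof). Finally, the smallest degree is $1$ (leaves) but the \emph{second} smallest degree among all vertices is $\min\{N+2\}=N+2>N$, since $u,w$ have degree $N+2$ and every $x_i,y_i$ has degree $N+2$ and every other vertex is a leaf.

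I would then verify that Proposition~\ref{endd-mind} genuinely applies: it requires $F$ to contain an end-star, which $T$ does (e.g.\ centered at $x_1$, with $\delta_e=N+2$), and it requires $u,w$ to be adjacent non-pendent vertices, which holds. With $|S|=0$ the bound reads $\text{w-sat}(T)\le k-1$ where $k+1=v(T)$, i.e.\ $\text{w-sat}(T)\le e(T)-1$, so $T$ is good. The main obstacle — and the only real subtlety — is the bookkeeping to guarantee $S=\varnothing$: one must be careful that the vertices $x_i,y_i$ adjacent to $u,w$ are each adjacent to at least one genuine leaf of $T$, so that they are excluded from $S$ by definition, and that no new non-leaf neighbor of $u$ or $w$ sneaks in; a short case check on the construction handles this. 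I would close by remarking that this construction, combined with Proposition~\ref{endd-mind}, shows that the sufficient condition ``$\delta_e(T)=1$'' of Lemma~\ref{P2_condition} is far from necessary, even though Theorem~\ref{goodtree} shows it \emph{is} necessary under the extra hypothesis that all leaf-to-leaf distances are even — and indeed in the present construction some leaf-to-leaf distances (e.g.\ from a leaf of $x_1$ to a leaf of $y_1$) are odd, so Theorem~\ref{goodtree} does not apply, consistently.
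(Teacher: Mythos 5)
Your proof is correct and uses the same engine as the paper: apply Proposition~\ref{endd-mind} to an adjacent non-pendant pair $u,w$ all of whose other neighbors are adjacent to leaves, so that $|S|=0$ and the bound collapses to $e(T)-1$. Your witness tree (a depth-two double spider around the edge $uw$) differs from the paper's depth-five rooted construction, but the verification that $S=\varnothing$ and that the second smallest degree is $N+2$ is the same in spirit and equally valid.
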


\begin{proof}
We can construct $T$ in this way: Let $v_0$ be a root and let $L_k=\{v|d(v, v_0)=k\}$, $k=1,\dots,5$, such that $v_0$ has $N+1$ children and every vertex in $L_k$, $k=1, \dots, 4$, has $N$ children. Then the vertices of $T$ are of degree $1$ or $N+1$. Let $v_1$ be a vertex in $L_1$, and $v_2$ be a child of $v_1$ in $L_2$. We add a pendent edge to every non-pendent vertex in $T$ except $v_1$ and $v_2$. We still denote the resulting tree by $T$. Directly apply Proposition~\ref{endd-mind} to $v_1$ and $v_2$ taking the roles of $u$ and $w$ respectively, note that, by our construction, any neighbor of $v_1$ or $v_2$ is either adjacent to a leaf or $v_1$ or $v_2$ itself, we conclude that $|S|=0$ and $\text{w-sat}(T)=e(T)-1$.
\end{proof}

\bibliographystyle{plain}
\addcontentsline{toc}{chapter}{Bibliography}
\bibliography{tree3}

\end{document}